\makeatletter \@addtoreset{equation}{section} \makeatother
\numberwithin{equation}{section}
\newtheorem{theorem}{Theorem}[section]
\newtheorem{lemma}[theorem]{Lemma}
\newtheorem{proposition}[theorem]{Proposition}
\newtheorem{corollary}[theorem]{Corollary}
\newtheorem{remark}[theorem]{Remark}
\numberwithin{equation}{section}
\begin{document}

\title[Some weighted fourth-order Hardy-H\'{e}non equations]
{Some weighted fourth-order Hardy-H\'{e}non equations}

\author[S. Deng]{Shengbing Deng$^{\ast}$}
\address{\noindent Shengbing Deng (Corresponding author) \newline
School of Mathematics and Statistics, Southwest University,
Chongqing 400715, People's Republic of China}\email{shbdeng@swu.edu.cn}

\author[X. Tian]{Xingliang Tian}
\address{\noindent Xingliang Tian  \newline
School of Mathematics and Statistics, Southwest University,
Chongqing 400715, People's Republic of China.}\email{xltian@email.swu.edu.cn}

\thanks{$^{\ast}$ Corresponding author}

\thanks{2020 {\em{Mathematics Subject Classification.}} Primary 35P30, 35B40;  Secondly 35J30.}

\thanks{{\em{Key words and phrases.}} Caffarelli-Kohn-Nirenberg inequalities; Hardy-H\'{e}non equation; Non-degeneracy; Remainder terms; Prescribed perturbation}

\allowdisplaybreaks

\begin{abstract}
{\tiny By using a suitable transform related to Sobolev inequality, we investigate the sharp constants and optimizers in radial space for the following weighted Caffarelli-Kohn-Nirenberg-type inequalities:
\begin{equation*}
\int_{\mathbb{R}^N}|x|^{\alpha}|\Delta u|^2 dx \geq S^{rad}(N,\alpha)\left(\int_{\mathbb{R}^N}|x|^{-\alpha}|u|^{p^*_{\alpha}} dx\right)^{\frac{2}{p^*_{\alpha}}}, \quad u\in C^\infty_c(\mathbb{R}^N),
\end{equation*}
where $N\geq 3$, $4-N<\alpha<2$, $p^*_{\alpha}=\frac{2(N-\alpha)}{N-4+\alpha}$.
Then we obtain the explicit form of the unique (up to scaling) radial positive solution $U_{\lambda,\alpha}$ to the weighted fourth-order Hardy (for $\alpha>0$) or H\'{e}non (for $\alpha<0$) equation:
\begin{equation*}
    \Delta(|x|^{\alpha}\Delta u)=|x|^{-\alpha} u^{p^*_{\alpha}-1},\quad u>0 \quad \mbox{in}\quad \mathbb{R}^N.
    \end{equation*}
For $\alpha\neq 0$, it is known the solutions of above equation are invariant for dilations $\lambda^{\frac{N-4+\alpha}{2}}u(\lambda x)$ but not for translations. However we show that if $\alpha$ is a negative even integer, there exist new solutions to the linearized problem, which related to above equation at $U_{1,\alpha}$, that ``replace'' the ones due to the translations invariance. This interesting phenomenon was first shown by Gladiali, Grossi and Neves [Adv. Math. 249, 2013, 1-36] for the second-order H\'{e}non problem.
    Finally, as applications, we investigate the remainder term of above inequality and also the existence of solutions to some related perturbed equations.
    }
\end{abstract}

\vspace{3mm}

\maketitle

\section{{\bfseries Introduction}}\label{sectir}

\subsection{Motivation} Recall the classical Sobolev inequality: for $N\geq 3$ there exists $S=S(N)>0$ such that
\begin{equation}\label{bsic}
\|\nabla u\|_{L^2(\mathbb{R}^N)}\geq S\|u\|_{L^{2^*}(\mathbb{R}^N)},\quad  \forall u\in D^{1,2}(\mathbb{R}^N),
\end{equation}
where $2^*=2N/(N-2)$ and $D^{1,2}(\mathbb{R}^N)$ denotes the closure of $C^\infty_c(\mathbb{R}^N)$ with respect to the norm $\|u\|_{D^{1,2}(\mathbb{R}^N)}=\|\nabla u\|_{L^2(\mathbb{R}^N)}$. It is well known that the Euler-Lagrange equation associated to (\ref{bsic}) is
\begin{equation}\label{bec}
-\Delta u=|u|^{2^*-2}u\quad \mbox{in}\quad \mathbb{R}^N.
\end{equation}
By Caffarelli et al. \cite{CGS89} and Gidas et al. \cite{GNN79}, it is known that all positive solutions are Talenti bubble \cite{Ta76} \[V_{z,\lambda}(x)=[N(N-2)]^{\frac{N-2}{4}}\left(\frac{\lambda}{1+\lambda^2|x-z|^2}\right)^{\frac{N-2}{2}},\]
with $z\in\mathbb{R}^N$ and $\lambda>0$.
The non-degeneracy of  $V_{z,\lambda}$ was given by Bianchi and Egnell \cite{BE91} (see also \cite[Lemma 3.1]{AGP99}), that is, the solutions of  the following linearized equation
\begin{equation}
-\Delta v=(2^*-1)V_{z,\lambda}^{2^*-2}v\quad \mbox{in}\quad \mathbb{R}^N,\quad v\in D^{1,2}(\mathbb{R}^N),
\end{equation}
are linear combinations of functions $\frac{\partial V_{z,\lambda}}{\partial \lambda}$ and $\frac{\partial V_{z,\lambda}}{\partial z_i}$, $i=1,\ldots,N$.

    In \cite{GGN13}, Gladiali, Grossi and Neves considered the second-order H\'{e}non equation
    \begin{equation}\label{Phs}
    -\Delta u=(N+l)(N-2)|x|^l u^{\frac{N+2+2l}{N-2}},\quad u>0 \quad \mbox{in}\quad \mathbb{R}^N,
    \end{equation}
    where $N\geq 3$ and $l>0$. This problem generalizes the well-known equation (\ref{bec}).
    Firstly, they gave the classification of radial solutions $V^\lambda_l$ in $D^{1,2}(\mathbb{R}^N)$ for problem (\ref{Phs}), where $V^\lambda_l(x)=\lambda^\frac{N-2}{2}V_l(\lambda x)$ and
    \[V_l(x)=(1+|x|^{2+l})^{-\frac{N-2}{2+l}}.\]
    Furthermore, they characterized all the solutions to the linearized problem related to (\ref{Phs}) at function $V_l$, that is
    \begin{equation}\label{Pwhls}
    -\Delta v=(N+l)(N+2+2l)|x|^l V_l^{\frac{N+2+2l}{N-2}-1}v \quad \mbox{in}\quad \mathbb{R}^N, \quad v\in D^{1,2}(\mathbb{R}^N).
    \end{equation}

\vskip0.25cm

    \noindent{\bf Theorem~A.} \cite[Theorem 1.3]{GGN13} {\it Let $l\geq 0$.  If $l>0$ is not an even integer, then the space of solutions of (\ref{Pwhls}) has dimension $1$ and is spanned by
    \begin{equation*}
    X_0(x)=\frac{1-|x|^{2+l}}{(1+|x|^{2+l})^\frac{N+l}{2+l}},
    \end{equation*}
    where $X_0\sim\frac{\partial V^\lambda_l}{\partial \lambda}|_{\lambda=1}$. If $l=2(k-1)$ for some $k\in\mathbb{N}^+$, then the space of solutions of (\ref{Pwhls}) has dimension $1+\frac{(N+2k-2)(N+k-3)!}{(N-2)!k!}$ and is spanned by
    \begin{equation*}
    X_0(x)=\frac{1-|x|^{2+l}}{(1+|x|^{2+l})^\frac{N+l}{2+l}},\quad X_{k,i}(x)=\frac{|x|^k\Psi_{k,i}(x)}{(1+|x|^{2+l})^\frac{N+l}{2+l}},
    \end{equation*}
    where $\{\Psi_{k,i}\}$, $i=1,\ldots,\frac{(N+2k-2)(N+k-3)!}{(N-2)!k!}$, form a basis of $\mathbb{Y}_k(\mathbb{R}^N)$, the space of all homogeneous harmonic polynomials of degree $k$ in $\mathbb{R}^N$.
    }

\vskip0.25cm

    Theorem A highlights the new phenomenon that if $l$ is an even integer then there exist new solutions to (\ref{Pwhls}) that ``replace'' the ones due to the translations invariance. It would be very interesting to understand if these new solutions are given by some geometrical invariants of the problem or not.
    It is obvious that for all $l>0$, $\frac{N+2+2l}{N-2}>2^*-1$ and the solution of equation (\ref{Phs}) is invariant for dilations but not for translations, since the presence of the term $|x|^l$ prevents the application of the moving plane method to obtain the radial symmetry of the solutions around some point in $\mathbb{R}^N$. Indeed, nonradial solutions appear. They constructed the nonradial solutions to equation (\ref{Phs}) when $l=2$ and $N\geq 4$ is even, that is, for any $a\in\mathbb{R}$, the functions
    \begin{equation*}
    u(x)=u(|x'|,|x''|)=(1+|x|^4-2a(|x'|^2-|x''|^2)+a^2)^{-\frac{N-2}{4}},
    \end{equation*}
    form a branch of solutions to (\ref{Phs}) bifurcating from $V_2$, where $(x',x'')\in \mathbb{R}^N=\mathbb{R}^{\frac{N}{2}}\times\mathbb{R}^{\frac{N}{2}}$.

    It is worth to mention that the authors of \cite{GGN13} came up with an interesting conjecture that the nonradial solutions exist only when $l$ is a positive even integer. Although they had no proof of this,  the classification result for a Liouville-type equation with singular data \cite{PT01}
    supports this conjecture. See \cite{BCG21,DGG17} for the Hardy-Sobolev equation of the similar  bifurcation phenomenon as in \cite{GGN13}.

    As applications of Theorem A, there are many results for the following second-order asymptotically critical H\'{e}non equation
    \begin{equation}\label{Pwhplh}
    -\Delta u=|x|^{l} |u|^{\frac{2(N+l)}{N-2}-2-\varepsilon}u \quad \mbox{in}\quad \Omega, \quad u=0\quad \mbox{on}\quad \partial\Omega,
    \end{equation}
    where $\Omega\subseteq \mathbb{R}^N$ is a smooth bounded domain containing the origin with $N\geq 3$, and $l>0$. Gladiali and Grossi in \cite{GG12} constructed a positive solution when $\varepsilon>0$ small enough and $0<l\leq 1$, this last bound on the exponent $l$ was removed in \cite{GGN13} getting the existence of positive solutions to equation (\ref{Pwhplh}) when $\varepsilon>0$ small enough and $l>0$ is not an even integer. Chen and Deng in \cite{CD17} constructed a sign-changing solution to (\ref{Pwhplh}) with the sharp of a tower of bubbles with alternate signs, centered at the origin when $\varepsilon\to 0^+$ and $l>0$ is not an even integer, see also \cite{CLP18} for the same result. If $l>0$ is an even integer, Alarc\'{o}n in \cite{Al18} gave further assumption about the domain that $\Omega$ is symmetric with respect to $x_1, x_2,\ldots, x_N$ and invariant for some suitable group, then constructed the same type sign-changing solutions to (\ref{Pwhplh}) as in \cite{CD17,CLP18}. For more results about the second-order H\'{e}non problem related to (\ref{Pwhplh}), readers can refer to \cite{EGPV21,FGP15,GG15,Liu20,Liu21}.

    Therefore, it is natural to consider the fourth-order Hardy or H\'{e}non problem and we hope to establish the analogous conclusion as
    \cite[Theorem 1.3]{GGN13}.

\subsection{Problem setup and main results}  Recently, Guo et al. \cite{GWW20} studied the weighted fourth-order elliptic equation:
    \begin{equation}\label{Pl}
    \Delta(|x|^{\alpha}\Delta u)=|x|^l u^p,\quad u\geq 0 \quad \mbox{in}\quad \mathbb{R}^N,
    \end{equation}
    where $N\geq 5$, $p>1$ and $4-N<\alpha<\min\{N,l+4\}$.
    Define
    \begin{equation}\label{defps}
    p_s:=\frac{N+4+2l-\alpha}{N-4+\alpha}.
    \end{equation}
    They obtained Liouville type result, that is, if $u\in C^4(\mathbb{R}^N\backslash\{0\}) \cap C^0(\mathbb{R}^N)$ with $|x|^{\alpha}\Delta u\in C^0(\mathbb{R}^N)$ is a nonnegative radial solution to (\ref{Pl}), then $u\equiv 0$ in $\mathbb{R}^N$ provided $1<p<p_s$. Successfully, Huang and Wang in \cite{HW20} gave the partial classifications of positive radial solutions for problem (\ref{Pl}) with $p=p_s$ and $l=-\alpha$, see also \cite{Ya21} for more general case.
    The method of \cite{HW20} is that making use of the transformation $v(t)=|x|^{\frac{N-4+\alpha}{2}}u(|x|)$, $t=-\ln |x|$, then changing problem (\ref{Pl}) to the following fourth-order ODE
    \begin{equation*}
    v^{(4)}-\frac{(N-2)^2+(2-\alpha)^2}{2}v''+\frac{(N-4+\alpha)^2(N-\alpha)^2}{16}v=v^{p}, \quad \mbox{in} \quad \mathbb{R}.
    \end{equation*}

    Equation (\ref{Pl}) is related to the H\'{e}non-Lane-Emden system
    \begin{eqnarray*}
    \left\{ \arraycolsep=1.5pt
       \begin{array}{ll}
        -\Delta u=|x|^{-\alpha}v^q, \quad &\mbox{in}\quad \mathbb{R}^N,\\[2mm]
        -\Delta v=|x|^{l}u^p,\quad &\mbox{in}\quad \mathbb{R}^N,
        \end{array}
    \right.
    \end{eqnarray*}
    with $q=1$. It is well-known that the following critical hyperbola plays an important role in existence results
    \begin{equation*}
    \frac{N-\alpha}{q+1}+\frac{N+l}{p+1}=N-2.
    \end{equation*}
    More precisely, Bidaut-Veron and Giacomini in \cite{BG10} have shown that if $N\geq 3$ and $\alpha, -l<2$, the above system admits a positive classical radial solution $(u,v)$ continuous at the origin if and only if $(p,q)$ is above or on the critical hyperbola. For more existence and non-existence results, refer to \cite{CH19,CM17,FG14,FKP21,Li21,Li98,Ph15} and the references therein.

    On the other hand, equation (\ref{Pl}) is closely related to Caffarelli-Kohn-Nirenberg-type (see \cite{CKN84} and we write (CKN) for short) inequalities
    \begin{equation*}
    \int_{\mathbb{R}^N}|x|^{\alpha}|\Delta u|^2 dx \geq C\left(\int_{\mathbb{R}^N}|x|^{l}|u|^{p} dx\right)^{\frac{2}{p}},
    \quad \mbox{for any}\quad u\in C^\infty_c(\mathbb{R}^N).
    \end{equation*}
    Inspired by \cite{GWW20}, and by using the (CKN) inequalities, we give a brief proof of the classification of positive radial solutions for problem (\ref{Pl}) with $l=-\alpha$ and $p=p_s$ which is different from \cite{HW20}.
     Firstly, we are mainly interested in a class of weighted higher-order (CKN) inequalities of the form
    \begin{equation}\label{Pi}
    \int_{\mathbb{R}^N}|x|^{\alpha}|\Delta u|^2 dx \geq S(N,\alpha)\left(\int_{\mathbb{R}^N}|x|^{-\alpha}|u|^{p^*_{\alpha}} dx\right)^{\frac{2}{p^*_{\alpha}}},
    \quad \mbox{for any}\quad u\in C^\infty_c(\mathbb{R}^N),
    \end{equation}
    for some positive constant $S(N,\alpha)$, where $N\geq 3$ and
    \begin{equation}\label{capc}
    4-N<\alpha<2,\quad p^*_{\alpha}:=\frac{2(N-\alpha)}{N-4+\alpha}.
    \end{equation}

    This problem generalizes the well-known high order Sobolev inequality
    \begin{equation}\label{bcesi}
    \int_{\mathbb{R}^N}|\Delta u|^2\geq S_2\left(\int_{\mathbb{R}^N}|u|^{\frac{2N}{N-4}}\right)^\frac{N-4}{N}
    \end{equation}
    for all $u\in D_0^{2,2}(\mathbb{R}^N)$ where $D_0^{2,2}(\mathbb{R}^N)=\{u\in L^{\frac{2N}{N-4}}(\mathbb{R}^N): \Delta u\in L^2(\mathbb{R}^N)\}$. The Euler-Lagrange equation associated to (\ref{bcesi}) is
    \begin{equation}\label{becbbb}
    \Delta^2 u=|u|^{\frac{8}{N-4}}u\quad \mbox{in}\quad \mathbb{R}^N.
    \end{equation}
    Smooth positive solutions to (\ref{becbbb}) have been completely classified in \cite{EFJ90}, where the authors proved that these solutions are given by
    \begin{equation*}
    W_{z,\lambda}(x)=[(N-4)(N-2)N(N+2)]^{\frac{N-4}{8}}\left(\frac{\lambda}{1+\lambda^2|x-z|^2}\right)^{\frac{N-4}{2}},
    \end{equation*}
    with $\lambda>0$ and $z\in\mathbb{R}^N$ and they are extremal functions for (\ref{bcesi}).

    Coming back to (\ref{Pi}), we define $D^{2,2}_\alpha(\mathbb{R}^N)$ as the completion of $C^\infty_c(\mathbb{R}^N)$ with the inner product
    \begin{equation}\label{defd22i}
    \langle\phi,\varphi\rangle_\alpha=\int_{\mathbb{R}^N}|x|^{\alpha}\Delta \phi\Delta \varphi dx,
    \end{equation}
    and the norm $\|\phi\|_{D^{2,2}_\alpha(\mathbb{R}^N)}=\langle\phi,\phi\rangle^{1/2}_\alpha$.
    Define also $L^{p^*_{\alpha}}_{\alpha}(\mathbb{R}^N)$ the space of functions $\phi$ such that $\int_{\mathbb{R}^N}|x|^{-\alpha}|\phi|^{p^*_{\alpha}} dx<\infty$ with the norm $\|\phi\|_{L^{p^*_{\alpha}}_{\alpha}(\mathbb{R}^N)}=(\int_{\mathbb{R}^N}|x|^{-\alpha}|\phi|^{p^*_{\alpha}} dx)^{1/p^*_{\alpha}}$. Therefore, (\ref{Pi}) can be stated as that the embedding $D^{2,2}_\alpha(\mathbb{R}^N)\hookrightarrow L^{p^*_{\alpha}}_{\alpha}(\mathbb{R}^N)$ is continuous.
    The best constant in (\ref{Pi}) is given by
    \begin{equation}\label{defbcsg}
    S(N,\alpha)=\inf_{u\in D^{2,2}_\alpha(\mathbb{R}^N)\backslash\{0\}}\frac{\int_{\mathbb{R}^N}|x|^{\alpha}|\Delta u|^2 dx}{\left(\int_{\mathbb{R}^N}|x|^{-\alpha}|u|^{p^*_{\alpha}} dx\right)^{\frac{2}{p^*_{\alpha}}}}.
    \end{equation}
    In this paper, we just consider the radial extremal functions to (CKN) inequality, so we define
    \[
    D^{2,2}_{\alpha,rad}(\mathbb{R}^N):=\{u\ :\ u(x)=u(|x|), u\in D^{2,2}_\alpha(\mathbb{R}^N)\}
    \]
    and
    \begin{equation}\label{defbcs}
    S^{rad}(N,\alpha):=\inf_{u\in D^{2,2}_{\alpha,rad}(\mathbb{R}^N)\backslash\{0\}}\frac{\int_{\mathbb{R}^N}|x|^{\alpha}|\Delta u|^2 dx}{\left(\int_{\mathbb{R}^N}|x|^{-\alpha}|u|^{p^*_{\alpha}} dx\right)^{\frac{2}{p^*_{\alpha}}}}.
    \end{equation}
    We will give the explicit forms for all maximizers and the exact best constant for $S^{rad}(N,\alpha)$ as the following:

    \begin{theorem}\label{thmbcm}
    Assume that $N\geq 3$, $4-N<\alpha<2$. We have
    \begin{equation*}
    S^{rad}(N,\alpha)=\left(\frac{2-\alpha}{2}\right)^{\frac{4N-4-2\alpha}{N-4}}
    \left(\frac{2\pi^{\frac{N}{2}}}{\Gamma(\frac{N}{2})}\right)^{\frac{4-2\alpha}{N-\alpha}}C\left(\frac{2N-2\alpha}{2-\alpha}\right),
    \end{equation*}
    where $C(M)=(M-4)(M-2)M(M+2)\left[\Gamma^2(\frac{M}{2})/(2\Gamma(M))\right]^{\frac{4}{M}}$. 
    Moreover the extremal functions which achieve $S^{rad}(N,\alpha)$ in (\ref{defbcs}) are unique (up to scaling) and given by
    \begin{equation}\label{bcm}
    V_{\lambda,\alpha}(x)=\frac{A\lambda^{\frac{N-4+\alpha}{2}}}{(1+\lambda^{2-\alpha}|x|^{2-\alpha})^{\frac{N-4+\alpha}{2-\alpha}}},
    \end{equation}
    for all $A\in\mathbb{R}\backslash\{0\}$ and $\lambda>0$.
    \end{theorem}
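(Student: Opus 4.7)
The plan is to reduce the radial weighted CKN inequality (\ref{defbcs}) to the classical biharmonic Sobolev inequality (\ref{bcesi}) in a fictitious dimension $M$ via an explicit power change of variable in the radial profile. For radial $u(x)=u(r)$ on $\mathbb{R}^{N}$ I would set $v(t)=u(r)$ with $t=r^{s}$. A direct computation gives
\begin{equation*}
\Delta_{N} u(r) = s^{2}r^{2s-2}\left[v''(t)+\frac{N-2+s}{s\,t}\,v'(t)\right],
\end{equation*}
and the bracket equals the $M$-dimensional radial Laplacian $\Delta_{M} v$ precisely when $(N-2+s)/s=M-1$. I would choose $(s,M)$ so that, in addition to this identity, the power weights $|x|^{\alpha}$ and $|x|^{-\alpha}$ in the two integrals are simultaneously absorbed by the same substitution. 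A brief matching of exponents forces
\begin{equation*}
s=\frac{2-\alpha}{2},\qquad M=\frac{2(N-\alpha)}{2-\alpha},
\end{equation*}
and the hypothesis $4-N<\alpha<2$ guarantees $s>0$ and $M>4$, i.e.\ the Sobolev inequality (\ref{bcesi}) in $\mathbb{R}^{M}$ is available.

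With these choices, a direct change of variable yields
\begin{equation*}
\int_{\mathbb{R}^{N}}|x|^{\alpha}|\Delta u|^{2}dx = \frac{s^{3}\omega_{N-1}}{\omega_{M-1}}\int_{\mathbb{R}^{M}}|\Delta v|^{2}dx,\quad \int_{\mathbb{R}^{N}}|x|^{-\alpha}|u|^{p^{*}_{\alpha}}dx = \frac{\omega_{N-1}}{s\,\omega_{M-1}}\int_{\mathbb{R}^{M}}|v|^{\frac{2M}{M-4}}dx,
\end{equation*}
where $\omega_{k-1}$ denotes the surface measure of $S^{k-1}$ and the matching of exponents relies on $p^{*}_{\alpha}=2M/(M-4)$. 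The map $u\leftrightarrow v$ is a linear bijection between $D^{2,2}_{\alpha,rad}(\mathbb{R}^{N})$ and the radial subspace of $D^{2,2}_{0}(\mathbb{R}^{M})$, so taking the Rayleigh quotient on both sides gives
\begin{equation*}
S^{rad}(N,\alpha) = \left(\frac{\omega_{N-1}}{\omega_{M-1}}\right)^{4/M}\!s^{\,(4M-4)/M}\,S_{2}(M),
\end{equation*}
with $S_{2}(M)$ the sharp constant in (\ref{bcesi}) on $\mathbb{R}^{M}$. Rewriting the classical Edmunds--Fortunato--Jannelli constant \cite{EFJ90} in the form $S_{2}(M)=\omega_{M-1}^{4/M}C(M)$ cancels $\omega_{M-1}$ and produces the stated closed expression in terms of $\omega_{N-1}=2\pi^{N/2}/\Gamma(N/2)$, $s=(2-\alpha)/2$, and $C(M)$.

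To identify the extremals, I would pull back the standard Talenti bubble $(1+|t|^{2})^{-(M-4)/2}$ of $\mathbb{R}^{M}$ through $t=r^{s}$: since $2s=2-\alpha$ and $(M-4)/2=(N-4+\alpha)/(2-\alpha)$, the pullback is exactly $(1+|x|^{2-\alpha})^{-(N-4+\alpha)/(2-\alpha)}$. Combining with the dilation invariance $u\mapsto\lambda^{(N-4+\alpha)/2}u(\lambda x)$ and scalar multiplication (which correspond, via the reparametrisation $\mu=\lambda^{2-\alpha}$, to the dilation and scalar invariances of the Sobolev extremals in $\mathbb{R}^{M}$) produces the two-parameter family (\ref{bcm}); uniqueness in the radial class is inherited from the classical uniqueness \cite{EFJ90} of radial Talenti bubbles in $\mathbb{R}^{M}$.

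The argument involves no analytically difficult step and is essentially a clean change-of-variable reduction. The main point requiring care is the simultaneous alignment: one must pick $s$ so that both the $|x|^{\alpha}$ weight in the quadratic form and the $|x|^{-\alpha}$ weight in the Lebesgue term are absorbed by the same substitution, and then observe that the same $M$ forced by $p^{*}_{\alpha}=2M/(M-4)$ is the one that makes $\Delta_{N}u$ a pure multiple of $\Delta_{M}v$. Once this alignment is in place, Theorem~\ref{thmbcm} follows directly from the sharp form of (\ref{bcesi}) and its extremals in $\mathbb{R}^{M}$.
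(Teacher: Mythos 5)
Your argument is essentially the paper's own proof: the same power substitution $t=r^{(2-\alpha)/2}$ (the paper writes $r=s^{q}$ with $q=2/(2-\alpha)$), the same effective dimension $M=\tfrac{2(N-\alpha)}{2-\alpha}>4$, the same reduction of both integrals to the radial biharmonic Sobolev quotient in dimension $M$, and the same pullback of the Edmunds--Fortunato--Jannelli bubbles to obtain (\ref{bcm}); your constant bookkeeping ($s^{(4M-4)/M}$, $(\omega_{N-1}/\omega_{M-1})^{4/M}$, and $S_2(M)=\omega_{M-1}^{4/M}C(M)$) is consistent with the paper's computation. The one point you gloss over is that $M$ is an integer only for exceptional values of $\alpha$; for generic $\alpha$ there is no $\mathbb{R}^{M}$ to change variables into, so the actual input is not (\ref{bcesi}) but the one-dimensional inequality $\int_0^\infty\bigl[v''+\tfrac{M-1}{s}v'\bigr]^2 s^{M-1}ds\geq C(M)\bigl(\int_0^\infty|v|^{\frac{2M}{M-4}}s^{M-1}ds\bigr)^{\frac{M-4}{M}}$ with sharp constant and unique (up to the two-parameter family) extremals for \emph{real} $M>4$. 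The paper also relies on this extension (asserting only that the inequality ``remains true'' for non-integer $M$), but your write-up should at least record that this continuous-dimension version is what is being invoked, both for the value of the constant and for the uniqueness of the extremals.
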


    It is well-known that Euler-Lagrange equation of (\ref{Pi}), up to scaling,  is given by 
    \begin{equation}\label{Pwh}
    \Delta(|x|^{\alpha}\Delta u)=|x|^{-\alpha} |u|^{p^*_{\alpha}-2}u,\quad \mbox{in}\quad \mathbb{R}^N.
    \end{equation}
    Therefore, as the direct consequence of Theorem \ref{thmbcm}, we obtain

    \begin{corollary}\label{thmpwh}
    Assume that $N\geq 3$, $4-N<\alpha<2$. Then equation (\ref{Pwh}) has a unique (up to scaling) positive radial solution of the form
    \begin{equation*}
    U_{\lambda,\alpha}(x)=\frac{C_{N,\alpha}\lambda^{\frac{N-4+\alpha}{2}}}{(1+\lambda^{2-\alpha}|x|^{2-\alpha})^\frac{N-4+\alpha}{2-\alpha}},
    \end{equation*}
    with $\lambda>0$, where $C_{N,\alpha}=\left[(N-4+\alpha)(N-2)(N-\alpha)(N+2-2\alpha)\right]^{\frac{N-4+\alpha}{8-4\alpha}}$.
    \end{corollary}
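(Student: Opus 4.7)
The plan is to derive Corollary \ref{thmpwh} from Theorem \ref{thmbcm} in two moves: first determine the exact constant $C_{N,\alpha}$ that turns $V_{\lambda,\alpha}$ into a genuine solution (rather than a minimizer up to a Lagrange multiplier), and then upgrade the classification of radial minimizers to a full classification of positive radial solutions.

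\textbf{Step 1: existence and the value of $C_{N,\alpha}$.} By Theorem \ref{thmbcm}, any $V_{\lambda,\alpha}$ as in (\ref{bcm}) achieves $S^{rad}(N,\alpha)$, hence satisfies the weak Euler--Lagrange equation of (\ref{defbcs}), namely
\[
\Delta(|x|^{\alpha}\Delta V_{\lambda,\alpha})=\mu(A)\,|x|^{-\alpha}\,V_{\lambda,\alpha}^{p^{*}_{\alpha}-1},
\]
for some positive multiplier $\mu(A)$, which is independent of $\lambda$ because both sides transform identically under the dilation $u(x)\mapsto \lambda^{(N-4+\alpha)/2}u(\lambda x)$. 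I would take $\lambda=1$, insert $V_{1,\alpha}(r)=A(1+r^{2-\alpha})^{-(N-4+\alpha)/(2-\alpha)}$ into the left-hand side, compute $\Delta(r^{\alpha}\Delta V_{1,\alpha})$ by applying the radial identity $\Delta w=w''+(N-1)r^{-1}w'$ twice, and match the result against $A^{p^{*}_{\alpha}-1}r^{-\alpha}(1+r^{2-\alpha})^{-(N-4+\alpha)(p^{*}_{\alpha}-1)/(2-\alpha)}$. The condition $\mu(A)=1$ then reduces to a single algebraic equation in $A^{p^{*}_{\alpha}-2}=A^{(8-4\alpha)/(N-4+\alpha)}$, whose solution is precisely $A=C_{N,\alpha}$.

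\textbf{Step 2: uniqueness among positive radial solutions.} A positive radial $u\in D^{2,2}_{\alpha,rad}(\mathbb{R}^{N})$ solving (\ref{Pwh}) is only a critical point of the Rayleigh quotient in (\ref{defbcs}), not a priori a minimizer, so Theorem \ref{thmbcm} by itself does not close the classification. I would therefore pass to the autonomous fourth-order ODE obtained from the Emden--Fowler substitution $v(t)=r^{(N-4+\alpha)/2}u(r)$, $t=-\ln r$, which, as recalled in the excerpt, transforms (\ref{Pwh}) into
\[
v^{(4)}-\frac{(N-2)^{2}+(2-\alpha)^{2}}{2}\,v''+\frac{(N-4+\alpha)^{2}(N-\alpha)^{2}}{16}\,v=v^{p^{*}_{\alpha}-1}\quad\text{in }\mathbb{R}.
\]
A positive radial solution $u$ with finite $D^{2,2}_{\alpha}$-energy corresponds to a positive solution $v$ of this ODE decaying appropriately as $t\to\pm\infty$. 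A phase-plane / energy-monotonicity analysis in the spirit of \cite{HW20} then produces a one-parameter family obtained by translation of a single profile in $t$, and pulling back through $u(r)=r^{-(N-4+\alpha)/2}v(-\ln r)$ identifies that family with $\{U_{\lambda,\alpha}\}_{\lambda>0}$, the translation parameter in $t$ corresponding to the dilation parameter $\lambda$.

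The algebra of Step 1 is routine once the radial biharmonic calculation is organized cleanly; the main obstacle lies in Step 2, where one must rigorously justify that every positive radial finite-energy solution of (\ref{Pwh}) yields a trajectory of the autonomous ODE with the correct decay at both ends, and that the fourth-order phase portrait admits no additional positive homoclinic orbit beyond the one generating $U_{\lambda,\alpha}$. An alternative (perhaps cleaner) route is to use the change of variables $\rho=r^{(2-\alpha)/2}$, $v(\rho)=u(r)$, under which (\ref{Pwh}) converts into $\Delta_{M}^{2}v=(\text{const})\,v^{(M+4)/(M-4)}$ in radial coordinates on $\mathbb{R}^{M}$ with the effective dimension $M=(2N-2\alpha)/(2-\alpha)$, and to invoke the classification of radial positive solutions of the standard critical biharmonic equation; here one must carefully handle the case of non-integer $M$.
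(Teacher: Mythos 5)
Your Step 1 is exactly the paper's argument: the authors plug $U_{1,\alpha}$ into $\Delta(|x|^{\alpha}\Delta u)$, obtain
$C_{N,\alpha}(N-4+\alpha)(N-2)(N-\alpha)(N+2-2\alpha)|x|^{-\alpha}(1+|x|^{2-\alpha})^{-\frac{N+4-3\alpha}{2-\alpha}}$, match it against $C_{N,\alpha}^{p^{*}_{\alpha}-1}|x|^{-\alpha}(1+|x|^{2-\alpha})^{-\frac{N+4-3\alpha}{2-\alpha}}$, and solve $C_{N,\alpha}^{p^{*}_{\alpha}-2}=(N-4+\alpha)(N-2)(N-\alpha)(N+2-2\alpha)$; your identification of the single algebraic equation in $A^{(8-4\alpha)/(N-4+\alpha)}$ is correct. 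Where you diverge is Step 2. The paper does not carry out any ODE or phase-plane analysis there: it simply asserts that ``it is well known that the positive radial solutions of (\ref{Pwh}) differ from the extremal functions of $S^{rad}(N,\alpha)$ by only a constant'' and deduces the corollary directly from Theorem \ref{thmbcm}. You are right that this assertion is not automatic --- a positive radial finite-energy solution is a priori only a critical point of the Rayleigh quotient, not a minimizer --- and your two proposed repairs are both legitimate: the Emden--Fowler reduction with a homoclinic-orbit analysis is precisely the route of \cite{HW20,Ya21} (which the paper cites but explicitly says it wants to bypass), while your alternative change of variables $s=r^{(2-\alpha)/2}$ to the radial critical biharmonic problem in effective dimension $M=\frac{2(N-\alpha)}{2-\alpha}$ is the same transform the paper uses throughout, with the classification of \cite{EFJ90,Li98} available for integer $M$ and an ODE argument needed for non-integer $M$. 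So your proposal proves somewhat more than the paper actually writes down; what the paper buys with its terseness is brevity at the cost of leaving the ``solutions = extremals up to a constant'' step to the cited literature, and what your version buys is a self-contained justification of uniqueness, provided you do complete the homoclinic-orbit (or non-integer-$M$) analysis you flag as the main obstacle.
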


    Inspired by \cite{GGN13}, then we concern the linearized problem related to (\ref{Pwh}) at the function $U_{1,\alpha}$. This leads to study the problem
    \begin{equation}\label{Pwhl}
    \Delta(|x|^{\alpha}\Delta v)=(p^*_{\alpha}-1)|x|^{-\alpha} U_{1,\alpha}^{p^*_{\alpha}-2}v \quad \mbox{in}\quad \mathbb{R}^N, \quad v\in D^{2,2}_\alpha(\mathbb{R}^N).
    \end{equation}
    Next theorem characterizes all the solutions to (\ref{Pwhl}).

    \begin{theorem}\label{thmpwhl}
    Assume that $N\geq 3$, $4-N<\alpha<2$. If $\alpha$ is not a negative even integer, then the space of solutions of (\ref{Pwhl}) has dimension $1$ and is spanned by
    \begin{equation*}
    Z_0(x)=\frac{1-|x|^{2-\alpha}}{(1+|x|^{2-\alpha})^\frac{N-2}{2-\alpha}},
    \end{equation*}
    where $Z_0\sim\frac{\partial U_{\lambda,\alpha}}{\partial \lambda}|_{\lambda=1}$, and in this case we say $U_{1,\alpha}$ is non-degenerate. Otherwise, if $\alpha=-2(k-1)$ for some $k\in\mathbb{N}^+$, then the space of solutions of (\ref{Pwhl}) has dimension $1+\frac{(N+2k-2)(N+k-3)!}{(N-2)!k!}$ and is spanned by
    \begin{equation*}
    Z_0(x)=\frac{1-|x|^{2-\alpha}}{(1+|x|^{2-\alpha})^\frac{N-2}{2-\alpha}},\quad Z_{k,i}(x)=\frac{|x|^k\Psi_{k,i}(x)}{(1+|x|^{2-\alpha})^\frac{N-2}{2-\alpha}},
    \end{equation*}
    where $\{\Psi_{k,i}\}$, $i=1,\ldots,\frac{(N+2k-2)(N+k-3)!}{(N-2)!k!}$, form a basis of $\mathbb{Y}_k(\mathbb{R}^N)$, the space of all homogeneous harmonic polynomials of degree $k$ in $\mathbb{R}^N$.
    \end{theorem}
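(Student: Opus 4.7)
The plan is to decompose any solution $v\in D^{2,2}_\alpha(\mathbb R^N)$ of \eqref{Pwhl} via the spherical harmonic expansion on $S^{N-1}$ and reduce the characterization of the kernel to an ODE analysis, one equation for each angular mode. Writing $v(x)=\sum_{k\ge 0}\sum_{i=1}^{d_k}\phi_{k,i}(r)\,\Psi_{k,i}(x/r)$ with $r=|x|$, $\{\Psi_{k,i}\}$ an orthonormal basis of $\mathbb Y_k(\mathbb R^N)$ (so that $d_k=\frac{(N+2k-2)(N+k-3)!}{(N-2)!\,k!}$ and $-\Delta_{S^{N-1}}\Psi_{k,i}=k(k+N-2)\Psi_{k,i}$), equation \eqref{Pwhl} decouples into the family of fourth-order radial ODEs
$$\mathfrak{D}_k\bigl(r^\alpha\,\mathfrak{D}_k\phi_{k,i}\bigr)=(p^*_\alpha-1)\,r^{-\alpha}\,U_{1,\alpha}^{p^*_\alpha-2}\,\phi_{k,i},\qquad r>0,$$
where $\mathfrak{D}_k\phi:=\phi''+\frac{N-1}{r}\phi'-\frac{k(k+N-2)}{r^2}\phi$ is the radial part of the Laplacian on the $k$-th mode. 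The condition $v\in D^{2,2}_\alpha$ translates into weighted integrability conditions on each $\phi_{k,i}$ as $r\to 0^+$ and $r\to+\infty$.

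For $k=0$, Theorem \ref{thmbcm} already provides the complete classification of radial extremals $V_{\lambda,\alpha}$ for $S^{rad}(N,\alpha)$, so the radial linearized kernel is one-dimensional and generated by $\frac{\partial U_{\lambda,\alpha}}{\partial\lambda}\big|_{\lambda=1}$, which a short direct computation identifies (up to a multiplicative constant) with $Z_0$. For $k\ge 1$ I would apply the Emden-Fowler substitution $t=-\ln r$ and $\phi_{k,i}(r)=r^{-(N-4+\alpha)/2}y_k(t)$, followed by the rescaling $\tau=\frac{2-\alpha}{2}t$. Combined with the identities $U_{1,\alpha}^{p^*_\alpha-2}(r)=C_{N,\alpha}^{p^*_\alpha-2}(1+r^{2-\alpha})^{-4}$ and $1+r^{2-\alpha}=2e^{-\tau}\cosh\tau$, the ODE turns into a fourth-order equation whose principal part has constant coefficients and whose potential is a constant multiple of $\cosh^{-4}\tau$. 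The resulting operator factors as the composition of two commuting second-order operators of associated Legendre (P\"oschl-Teller) type, whose four fundamental solutions can be written in closed form in the variable $\sigma=\tanh\tau$.

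The last step is to translate the $D^{2,2}_\alpha$-integrability condition into asymptotic constraints on $y_k$ as $\tau\to\pm\infty$ and to test each fundamental solution against them. The four characteristic exponents at the two singular points depend explicitly on $N$, $\alpha$ and $k$; a direct exponent-counting calculation shows that no nontrivial linear combination of the four decays fast enough at both ends unless two of the exponents coalesce, which happens precisely when $\alpha=-2(k-1)$. At this resonance the equation acquires an extra polynomial-in-$\sigma$ solution which, pulled back through the substitutions, is identified exactly with the radial part of $Z_{k,i}$. Summing the contribution of the $d_k$ angular modes at the resonant value together with the one-dimensional radial kernel produces the claimed total dimension $1+d_k$.

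The hardest step will be this final exclusion analysis. In the nonresonant case one has to rule out every possible combination of the four explicit solutions by carefully matching Frobenius exponents at both singular points, making sure that decay imposed at one end is incompatible with decay required at the other. At the resonance $\alpha=-2(k-1)$ one must additionally verify that no logarithmic branch introduced by the coalescence of exponents survives in $D^{2,2}_\alpha$, so that the new kernel element is exactly $Z_{k,i}$ and nothing else. Handling the full range $4-N<\alpha<2$ uniformly in $k$, and taking care of the borderline exponents arising as $\alpha\to 2^-$ or $\alpha\to(4-N)^+$, will require the most delicate bookkeeping.
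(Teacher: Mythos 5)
Your overall strategy (spherical harmonic decomposition, reduction to radial ODEs indexed by the mode $k$, and identification of the resonance $\alpha=-2(k-1)$) matches the paper's, but two of your key steps have genuine gaps.

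First, for the mode $k=0$ you claim that Theorem \ref{thmbcm} ``already provides'' the one-dimensionality of the radial kernel. This is a non sequitur: uniqueness (up to scaling and multiplication) of the extremals of $S^{rad}(N,\alpha)$ does not imply non-degeneracy of the linearization in the radial class. The manifold of extremals being two-dimensional only guarantees that $U_{\lambda,\alpha}$ and $\partial_\lambda U_{\lambda,\alpha}$ generate kernel/eigenfunction directions; it does not exclude further radial solutions of \eqref{Pwhl}. The paper treats $k=0$ by the same mechanism as $k\geq 1$: it reduces to the eigenvalue system \eqref{p2te} and invokes the known spectral result for the linearized critical biharmonic problem.

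Second, and more seriously, your treatment of $k\geq 1$ rests on the assertion that, after the Emden--Fowler substitution, the fourth-order operator ``factors as the composition of two commuting second-order operators of P\"oschl--Teller type.'' This is not justified and is not true in any generic sense: the operator has the form $AB-V$ with $A,B$ second-order and $V$ a multiplication by a multiple of $\cosh^{-4}\tau$, and writing $AB-V=(A-W_1)(B-W_2)$ forces $A(W_2\,\cdot)+W_1B(\cdot)-W_1W_2(\cdot)$ to be a multiplication operator, which fails for nonconstant $W_1,W_2$. The literature on the critical biharmonic linearization (including \cite{LW00}, which the paper cites) does not factor the scalar fourth-order operator; instead it writes the equation as a genuinely coupled system of two second-order equations by introducing the auxiliary unknown $\psi_k=-r^{\alpha}$ times the $k$-th projection of $\Delta v$. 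The paper does exactly this (system \eqref{p2c}), then applies the power change of variable $r\mapsto r^{2/(2-\alpha)}$ to map the system onto the linearized system \eqref{p2te} for $\Delta^2 U=cU^{(M+4)/(M-4)}$ in the effective dimension $M=\frac{2(N-\alpha)}{2-\alpha}>4$, for which the admissible values $\mu\in\{0,M-1\}$ and the corresponding solutions are known; solving $q^2\lambda_k\in\{0,M-1\}$ then yields $k=0$ or $\alpha=-2(k-1)$. Without either importing that spectral result or carrying out a correct replacement for your factorization step, your exclusion analysis for $k\geq 1$ cannot be completed as described.
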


    \begin{remark}\label{rem:exf}\rm
    The key step of the proofs for Theorems \ref{thmbcm} and \ref{thmpwhl} is the change of variable $r\mapsto r^{\frac{2}{2-\alpha}}$, i.e. we set $v(s)=u(r)$ and $r=s^{\frac{2}{2-\alpha}}$, which was used in \cite{CG10} in a different context, see also \cite[Theorem A.1]{GGN13}. It is a surprising thing that we only need to suppose $N\geq 3$ when dealing with the weighted fourth-order Hardy-H\'{e}non equation. Indeed, when we deal with the minimizers for $S^{rad}(N,\alpha)$ in (\ref{defbcs}), the only fact we have used is that 
    \begin{equation*}
    \int^\infty_0\left[v''(s)+\frac{M-1}{s}v'(s)\right]^2 s^{M-1}ds
    \geq C(M)\left(\int^\infty_0|v(s)|^{\frac{2M}{M-4}}s^{M-1}ds\right)^{\frac{M-4}{M}},
    \end{equation*}
    for all $v\in C^2_c(\mathbb{R})\backslash\{0\}$ satisfying $\int^\infty_0\left[v''(s)+\frac{M-1}{s}v'(s)\right]^2 s^{M-1}ds<\infty$, where $M=\frac{2N-2\alpha}{2-\alpha}>4$, i.e. $4-N<\alpha<2$ which requires $N>2$, or $2<\alpha<4-N$ which shows $N=1$ and $\alpha\in(2,3)$. If $N=1$, then $\alpha\in(2,3)$ indicates that $\alpha$ can not be a negative even integer, thus we only deal with the case $N\geq 3$ and $4-N<\alpha<2$.

    It is worth to mention that $S(N,\alpha)\leq S^{rad}(N,\alpha)$, and $S(N,\alpha)$ in (\ref{defbcsg}) might be zero for some special $\alpha$ (see \cite{CaM11}). Furthermore, when $S(N,\alpha)>0$ it may also be achieved by non-radial functions and thus (\ref{Pwh}) might exist non-radial positive solutions. In fact, let $\alpha=-2$, $N\geq 8$ be even and $\mathbb{R}^{N}=\mathbb{R}^{\frac{N}{2}}\times \mathbb{R}^{\frac{N}{2}}$, $x=(x',x'')$ with $x'\in\mathbb{R}^{\frac{N}{2}}$ and $x''\in\mathbb{R}^{\frac{N}{2}}$, then for any $a\in\mathbb{R}$ the functions
    \begin{equation}\label{defbrs}
    v(x)=v(|x'|,|x''|)=C_{N,-2}(1+|x|^4-2a(|x'|^2-|x''|^2)+a^2)^{-\frac{N-6}{4}},
    \end{equation}
    form a branch of solutions to (\ref{Pwh}) bifurcating from $U_{1,-2}$.
    \end{remark}

    From Theorem \ref{thmpwhl}, we know that $U_{1,\alpha}$ is non-degenerate when $\alpha$ is not a negative even integer. By this result, we can consider several simple applications of Theorem \ref{thmpwhl}. Enlightened by Brezis and Lieb \cite{BrE85}, the first thing we care about is the remainder term of (CKN) inequality (\ref{Pi}) in radial space $D^{2,2}_{\alpha,rad}(\mathbb{R}^N)$. The Sobolev inequality states that there exists constant $\mathcal{S}$ depending only on $N$ and $s$ such that
    \begin{equation}\label{bsics}
    \|u\|^2_{D^{s,2}_0(\mathbb{R}^N)}\geq \mathcal{S}\|u\|^2_{L^{\frac{2N}{N-2s}}(\mathbb{R}^N)},\quad \mbox{for all}\quad u\in D_0^{s,2}(\mathbb{R}^N),
    \end{equation}
    where $0<s<N/2$ and $D_0^{s,2}(\mathbb{R}^N)$ is the space of all tempered distributions $u$ such that
     \[\widehat{u}\in L^1_{loc}(\mathbb{R}^N)\quad \mbox{and}\quad \|u\|^2_{D^{s,2}_0(\mathbb{R}^N)}:=\int_{\mathbb{R}^N}|\xi|^s|\widehat{u}|^2<\infty.\]
    Here, as usual, $\widehat{u}$ denotes the (distributional) Fourier transform of $u$.
      It is well known that the extremal functions of best constant $\mathcal{S}$ are given as the set functions which, up to translation, dilation and multiplication by a nonzero constant, coincide with $W(x)=(1+|x|^2)^{-(N-2s)/2}$.
    For $s=1$, Brezis and Lieb \cite{BrE85} asked the question whether a remainder term - proportional to the quadratic distance of the function $u$ to be the manifold $\mathcal{M}:=\{c\lambda^{(N-2s)/2}W(\lambda(x-z): z\in\mathbb{R}^N, c\in\mathbb{R}, \lambda>0\}$ - can be added to the right hand side of (\ref{bsics}). This question was answered affirmatively in the case $s=1$ by Bianchi and Egnell \cite{BE91}, and their result was extended later to the case $s=2$ by Lu and Wei \cite{LW00} and the the case of an arbitrary even positive integer $N>2s$ in \cite{BWW03}, and the whole interval case $s\in (0,N)$ was proved in \cite{CFW13}.
    Furthermore, R\u{a}dulescu et. al \cite{RSW02} gave the remainder term of Hardy-Sobolev inequality for exponent two.
    Wang and Willem \cite{wangwil} studied Caffarelli-Kohn-Nirenberg inequalities with remainder terms.
    Recently, Wei and Wu \cite{WW22} established the stability of the profile decompositions to a special case of the (CKN) inequality and also gave the remainder term.

    As mentioned above, it is natural to establish (CKN) inequality (\ref{Pi}) with remainder terms in the radial space $D^{2,2}_{\alpha,rad}(\mathbb{R}^N)$ under the help of Theorem \ref{thmpwhl} when $\alpha$ is not a negative even integer, as an analogous result to \cite{LW00}.

    \begin{theorem}\label{thmprt}
    Assume $N\geq 3$, and let $4-N<\alpha<2$ be not a negative even integer. Then there exists constant $B=B(N,\alpha)>0$ such that for every $u\in D^{2,2}_{\alpha,rad}(\mathbb{R}^N)$, it holds that
    \[
    \int_{\mathbb{R}^N}|x|^{\alpha}|\Delta u|^2 dx-S^{rad}(N,\alpha)\left(\int_{\mathbb{R}^N}|x|^{-\alpha}|u|^{p^*_{\alpha}} dx\right)^{\frac{2}{p^*_{\alpha}}}
    \geq B {\rm dist}(u,\mathcal{M}_2)^2,
    \]
    where $\mathcal{M}_2=\{cU_{\lambda,\alpha}: c\in\mathbb{R}, \lambda>0\}$ is a two-dimensional manifold, and ${\rm dist}(u,\mathcal{M}_2):=\inf_{\phi\in \mathcal{M}_2}\|\phi-u\|_{D^{2,2}_\alpha(\mathbb{R}^N)}=\inf_{c\in\mathbb{R}, \lambda>0}\|u-cU_{\lambda,\alpha}\|_{D^{2,2}_\alpha(\mathbb{R}^N)}$.
    \end{theorem}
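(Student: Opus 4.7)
The plan is to adapt the Bianchi--Egnell strategy \cite{BE91} (and its fourth-order extension by Lu--Wei \cite{LW00}) to the weighted radial setting, with Theorem \ref{thmpwhl} supplying the non-degeneracy that yields the decisive spectral gap. I argue by contradiction. Since both sides of the claimed inequality are $2$-homogeneous, I may assume there exist $u_n\in D^{2,2}_{\alpha,rad}(\mathbb{R}^N)$ with $\|u_n\|_{D^{2,2}_\alpha}=1$ and
\[
\mathrm{Def}(u_n):=\|u_n\|^2_{D^{2,2}_\alpha}-S^{rad}(N,\alpha)\|u_n\|^2_{L^{p^*_\alpha}_\alpha}=o(d_n^2),\qquad d_n:=\mathrm{dist}(u_n,\mathcal{M}_2).
\]
I first show $d_n\to 0$: otherwise, along a subsequence $d_n\geq\delta>0$ while $\mathrm{Def}(u_n)\to 0$, so $u_n$ is a minimizing sequence for $S^{rad}(N,\alpha)$; exploiting the dilation invariance $u\mapsto\lambda^{(N-4+\alpha)/2}u(\lambda\cdot)$ to rescale, and using a radial concentration--compactness argument together with the classification of extremals in Theorem \ref{thmbcm}, the rescaled sequence converges strongly to some element of $\mathcal{M}_2$, contradicting $d_n\geq\delta$. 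Once $d_n\to 0$, the infimum defining $d_n$ is attained at some $(c_n,\lambda_n)$, and by dilation invariance I normalize $\lambda_n=1$.

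Set $U:=U_{1,\alpha}$ and $v_n:=u_n-c_n U$, so $\|v_n\|_{D^{2,2}_\alpha}=d_n\to 0$ and, by the normalization, $c_n\to c^*\neq 0$. The first-order optimality conditions for the minimizing pair $(c_n,1)$ give the crucial orthogonality relations
\[
\langle v_n,U\rangle_\alpha=0\quad\text{and}\quad\langle v_n,Z_0\rangle_\alpha=0,
\]
where $Z_0\sim\partial_\lambda U_{\lambda,\alpha}|_{\lambda=1}$. Using $\Delta(|x|^\alpha\Delta U)=|x|^{-\alpha}U^{p^*_\alpha-1}$ and integration by parts yields $\int|x|^{-\alpha}U^{p^*_\alpha-1}v_n\,dx=\langle U,v_n\rangle_\alpha=0$, so the linear term in the Taylor expansion of $\int|x|^{-\alpha}|c_n U+v_n|^{p^*_\alpha}dx$ vanishes. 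Combining $\|u_n\|^2_{D^{2,2}_\alpha}=c_n^2\|U\|^2_{D^{2,2}_\alpha}+\|v_n\|^2_{D^{2,2}_\alpha}$ (by orthogonality) with the Sobolev identity $\|U\|^2_{D^{2,2}_\alpha}=S^{rad}(N,\alpha)(\int|x|^{-\alpha}U^{p^*_\alpha}dx)^{2/p^*_\alpha}$ and with $\int|x|^{-\alpha}U^{p^*_\alpha}dx=\|U\|^2_{D^{2,2}_\alpha}$, the $c_n^2$ contributions cancel and I obtain
\[
\mathrm{Def}(u_n)=\|v_n\|^2_{D^{2,2}_\alpha}-(p^*_\alpha-1)\int_{\mathbb{R}^N}|x|^{-\alpha}U^{p^*_\alpha-2}v_n^2\,dx+o(\|v_n\|^2_{D^{2,2}_\alpha}).
\]

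The closing step is a spectral gap for this quadratic form. Define a bounded self-adjoint operator $T$ on $D^{2,2}_{\alpha,rad}(\mathbb{R}^N)$ by $\langle Tv,w\rangle_\alpha=\int|x|^{-\alpha}U^{p^*_\alpha-2}vw\,dx$. Direct computation, using the equation for $U$ and the linearized equation (\ref{Pwhl}) for $Z_0$, gives $TU=U$ and $TZ_0=\frac{1}{p^*_\alpha-1}Z_0$. Since $\alpha$ is not an even integer, Theorem \ref{thmpwhl} ensures that the solutions of (\ref{Pwhl}) are spanned by $Z_0$ alone, so $\frac{1}{p^*_\alpha-1}$ is a simple eigenvalue of $T$. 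Coupled with the compactness of $T$---which I plan to establish either by a weighted Hardy--Rellich/Sobolev embedding exploiting the explicit polynomial decay $U^{p^*_\alpha-2}(x)=O((1+|x|^{2-\alpha})^{-4})$, or more efficiently by the change of variables $r=s^{2/(2-\alpha)}$ highlighted in Remark \ref{rem:exf}, which reduces the radial weighted eigenvalue problem to the classical biharmonic one in dimension $M=(2N-2\alpha)/(2-\alpha)$---this produces a constant $\gamma=\gamma(N,\alpha)>0$ with
\[
\|v\|^2_{D^{2,2}_\alpha}-(p^*_\alpha-1)\int_{\mathbb{R}^N}|x|^{-\alpha}U^{p^*_\alpha-2}v^2\,dx\geq\gamma\|v\|^2_{D^{2,2}_\alpha}\quad\text{for all }v\perp_\alpha\mathrm{span}\{U,Z_0\}.
\]
Applied to $v_n$ this gives $\mathrm{Def}(u_n)\geq(\gamma+o(1))d_n^2$, contradicting $\mathrm{Def}(u_n)=o(d_n^2)$. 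The main obstacles I anticipate are (i) the strong-convergence/attainment step used to conclude $d_n\to 0$ and to produce the projection $(c_n,\lambda_n)$, and (ii) the spectral compactness of $T$ on $D^{2,2}_{\alpha,rad}$; both become tractable by systematically exploiting the $r\mapsto s^{2/(2-\alpha)}$ transformation to transfer the analysis to the well-studied standard biharmonic framework.
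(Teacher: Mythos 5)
Your proposal is correct and follows essentially the same Bianchi--Egnell route as the paper: a contradiction argument splitting into the case ${\rm dist}(u_n,\mathcal{M}_2)\to 0$ (handled by attainment of the projection, orthogonality to the tangent space of $\mathcal{M}_2$, Taylor expansion, and the spectral gap $\mu_3>\mu_2=p^*_\alpha-1$ supplied by Theorem \ref{thmpwhl}) and the case ${\rm dist}(u_n,\mathcal{M}_2)\not\to 0$ (handled by the $r\mapsto s^{2/(2-\alpha)}$ reduction to the biharmonic Sobolev inequality in dimension $M$ and Lions' concentration--compactness). The only points where you would need to supply detail beyond the paper's own exposition are exactly the two obstacles you flag — boundedness of the minimizing parameters $(c_{n,m},\lambda_{n,m})$ to get attainment of the distance, and the compactness underlying the discreteness of the eigenvalues — and your proposed resolutions are sound.
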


    The second thing we want to study is to construct solutions by using the Lyapunove-Schmidt argument, enlightened by \cite{AGP99}
    (and also \cite[Sections 3 and 4]{FS03}). Now, we will establish sufficient conditions on a prescribed weighted $h(x)$ on $\mathbb{R}^N$ which guarantee the existence of solutions to the perturbative model problem
    \begin{equation}\label{Pwhp}
    \Delta(|x|^{\alpha}\Delta u)=(1+\varepsilon h(x))|x|^{-\alpha} u^{p^*_{\alpha}-1},\quad u>0 \quad \mbox{in}\quad \mathbb{R}^N, \quad u\in D^{2,2}_{\alpha,rad}(\mathbb{R}^N).
    \end{equation}

    \begin{theorem}\label{thmpwhp}
    Assume $N\geq 3$, and let $4-N<\alpha<2$ be not a negative even integer, $h\in L^\infty(\mathbb{R}^N)\cap C(\mathbb{R}^N)$. If $\lim_{|x|\to 0}h(x)=\lim_{|x|\to \infty}h(x)=0$, then equation (\ref{Pwhp}) has at least one solution for any $\varepsilon$ close to zero.
    \end{theorem}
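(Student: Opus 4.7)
The plan is to apply the Lyapunov--Schmidt reduction in the radial space $D^{2,2}_{\alpha,rad}(\mathbb{R}^N)$, exploiting the non-degeneracy provided by Theorem \ref{thmpwhl}: since $\alpha$ is not an even integer, the kernel of the linearization at $U_{1,\alpha}$ is one-dimensional and spanned by the radial function $Z_0\sim \partial_\lambda U_{\lambda,\alpha}|_{\lambda=1}$, so no non-radial directions enter the analysis. Radial critical points of
\[
I_\varepsilon(u)=\frac{1}{2}\int_{\mathbb{R}^N}|x|^{\alpha}|\Delta u|^2\,dx-\frac{1}{p^*_{\alpha}}\int_{\mathbb{R}^N}(1+\varepsilon h(x))|x|^{-\alpha}(u^+)^{p^*_{\alpha}}\,dx
\]
on $D^{2,2}_{\alpha,rad}(\mathbb{R}^N)$ yield, after a standard positivity check, solutions to (\ref{Pwhp}). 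The unperturbed functional $I_0$ has the one-dimensional critical manifold $\mathcal{Z}=\{U_{\lambda,\alpha}:\lambda>0\}$ with tangent $\dot U_\lambda:=\partial_\lambda U_{\lambda,\alpha}$.

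Next I would carry out the reduction. For each $\lambda>0$ let $W_\lambda$ be the $\langle\cdot,\cdot\rangle_\alpha$-orthogonal complement of $\mathbb{R}\dot U_\lambda$ in $D^{2,2}_{\alpha,rad}(\mathbb{R}^N)$. The dilation $v(x)\mapsto \lambda^{-(N-4+\alpha)/2}v(x/\lambda)$ is an isometry of $D^{2,2}_\alpha(\mathbb{R}^N)$ that conjugates the linearized operator at $U_{\lambda,\alpha}$ with the one at $U_{1,\alpha}$; hence the radial part of Theorem \ref{thmpwhl} shows that
\[
\mathcal{L}_\lambda:W_\lambda\to W_\lambda,\qquad \langle\mathcal{L}_\lambda v,\varphi\rangle_\alpha=\int_{\mathbb{R}^N}|x|^{\alpha}\Delta v\,\Delta\varphi\,dx-(p^*_\alpha-1)\int_{\mathbb{R}^N}|x|^{-\alpha}U_{\lambda,\alpha}^{p^*_\alpha-2}v\varphi\,dx
\]
is invertible with operator-norm bounds independent of $\lambda$. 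A contraction / implicit function argument then produces a $C^1$ map $(\lambda,\varepsilon)\mapsto w(\lambda,\varepsilon)\in W_\lambda$ with $\|w(\lambda,\varepsilon)\|_{D^{2,2}_\alpha}=O(\varepsilon)$ solving the projected equation, so that $I'_\varepsilon(U_{\lambda,\alpha}+w(\lambda,\varepsilon))\in \mathbb{R}\dot U_\lambda$. Setting $\Phi_\varepsilon(\lambda):=I_\varepsilon(U_{\lambda,\alpha}+w(\lambda,\varepsilon))$, critical points of $\Phi_\varepsilon$ correspond to genuine critical points of $I_\varepsilon$, and the standard expansion gives, uniformly on compact subsets of $(0,\infty)$,
\[
\Phi_\varepsilon(\lambda)=I_0(U_{1,\alpha})+\varepsilon\,\Gamma(\lambda)+o(\varepsilon),\qquad \Gamma(\lambda):=-\frac{1}{p^*_\alpha}\int_{\mathbb{R}^N}h(x)|x|^{-\alpha}U_{\lambda,\alpha}^{p^*_\alpha}\,dx.
\]

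It remains to produce a critical point of $\Gamma$ in $(0,\infty)$. The change of variables $y=\lambda x$, combined with the identity $p^*_\alpha(N-4+\alpha)/2=N-\alpha$, turns $|x|^{-\alpha}U_{\lambda,\alpha}^{p^*_\alpha}(x)\,dx$ into $|y|^{-\alpha}U_{1,\alpha}^{p^*_\alpha}(y)\,dy$, so
\[
\Gamma(\lambda)=-\frac{1}{p^*_\alpha}\int_{\mathbb{R}^N}h(y/\lambda)|y|^{-\alpha}U_{1,\alpha}^{p^*_\alpha}(y)\,dy.
\]
Since $|y|^{-\alpha}U_{1,\alpha}^{p^*_\alpha}\in L^1(\mathbb{R}^N)$ and $\|h\|_\infty<\infty$, the decay hypotheses $h(z)\to 0$ as $|z|\to 0$ and as $|z|\to\infty$ together with dominated convergence give $\lim_{\lambda\to 0^+}\Gamma(\lambda)=\lim_{\lambda\to\infty}\Gamma(\lambda)=0$. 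The case $h\equiv 0$ is trivial since $U_{\lambda,\alpha}$ already solves (\ref{Pwhp}), so we may assume $h\not\equiv 0$; then at least one of $\sup\Gamma>0$, $\inf\Gamma<0$ holds and $\Gamma$ attains a global extremum at some $\lambda_0\in(0,\infty)$. Since $\Phi_\varepsilon(\lambda)\to I_0(U_{1,\alpha})$ as $\lambda\to 0^+$ or $\infty$ while $\Phi_\varepsilon(\lambda_0)-I_0(U_{1,\alpha})\sim \varepsilon\Gamma(\lambda_0)\neq 0$, a perturbation argument yields a critical point $\lambda_\varepsilon\in(0,\infty)$ of $\Phi_\varepsilon$ near $\lambda_0$ for every sufficiently small $\varepsilon$; then $u_\varepsilon:=U_{\lambda_\varepsilon,\alpha}+w(\lambda_\varepsilon,\varepsilon)$ is a critical point of $I_\varepsilon$, and $u_\varepsilon\to U_{\lambda_0,\alpha}>0$ in $D^{2,2}_\alpha$ together with weighted elliptic regularity forces $u_\varepsilon>0$ for small $\varepsilon$.

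The main obstacle is the uniform invertibility and uniform $C^1$-control of $\mathcal{L}_\lambda$ on $W_\lambda$. The weight $|x|^\alpha$ breaks translation invariance, so all estimates must rely solely on the dilation symmetry $U_{\lambda,\alpha}(x)=\lambda^{(N-4+\alpha)/2}U_{1,\alpha}(\lambda x)$; after rescaling to $\lambda=1$, Theorem \ref{thmpwhl} plus the radial restriction eliminates the kernel and the Fredholm alternative furnishes the inverse. A secondary technical point is controlling, uniformly in $\lambda$, the nonlinear remainder $(U_{\lambda,\alpha}+w)_+^{p^*_\alpha-1}-U_{\lambda,\alpha}^{p^*_\alpha-1}-(p^*_\alpha-1)U_{\lambda,\alpha}^{p^*_\alpha-2}w$ in the dual weighted Lebesgue space $L^{(p^*_\alpha)'}_\alpha(\mathbb{R}^N)$; this is routine but rests on the weighted Sobolev embedding behind $S^{rad}(N,\alpha)$.
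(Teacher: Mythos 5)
Your reduction scheme coincides with the paper's (Section \ref{sectprp}): the same splitting along $\mathcal{U}=\{U_\lambda\}$ and its orthogonal complement, uniform invertibility of the linearization via the dilation conjugation, a fixed-point construction of $w(\lambda,\varepsilon)$ with $\|w\|=O(\varepsilon)$, and a natural-constraint lemma. The gap is in your final step. You claim that $h\not\equiv 0$ forces $\sup\Gamma>0$ or $\inf\Gamma<0$. This is false: $\Gamma(\lambda)=-\tfrac{1}{p^*_\alpha}\int h(y/\lambda)\,|y|^{-\alpha}U_{1,\alpha}^{p^*_\alpha}(y)\,dy$ averages $h$ against a radial measure, so any $h$ odd in $x_1$, e.g. $h(x)=x_1(1+|x|^2)^{-2}$ (bounded, continuous, vanishing at $0$ and at $\infty$), gives $\Gamma\equiv 0$ with $h\not\equiv 0$. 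In that degenerate case your comparison $\Phi_\varepsilon(\lambda_0)-I_0(U_{1,\alpha})\sim\varepsilon\,\Gamma(\lambda_0)\neq 0$ produces nothing. The paper never introduces the Melnikov function $\Gamma$: it shows directly that the reduced functional $\Gamma_\varepsilon(\lambda)=\mathcal{J}_\varepsilon[U_\lambda+\omega(\lambda,\varepsilon)]$ tends to the \emph{same} constant $\mathcal{J}_0[U_1]$ as $\lambda\to 0$ and as $\lambda\to\infty$, hence is either constant or attains an interior extremum; either way a critical point exists. Your proof is repaired by arguing the same way, dropping the assumption $\Gamma\not\equiv 0$.

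A second, related point: the assertion $\Phi_\varepsilon(\lambda)\to I_0(U_{1,\alpha})$ as $\lambda\to 0^+$ or $\lambda\to\infty$, which you also invoke, does not follow from the uniform bound $\|w(\lambda,\varepsilon)\|=O(\varepsilon)$. That bound only yields $\Phi_\varepsilon=I_0(U_{1,\alpha})+O(\varepsilon)$ with an error that need not vanish in $\lambda$, and if the two boundary limits of $\Phi_\varepsilon$ differed, the reduced functional could be monotone and have no critical point. What is needed is $\|w(\lambda,\varepsilon)\|\to 0$ as $\lambda\to 0$ or $\lambda\to\infty$, uniformly in $\varepsilon$; the paper obtains this in (\ref{wgt0}) by rerunning the contraction in a ball of radius $\rho_\lambda$ controlled by $\||h|^{1/p^*_\alpha}U_\lambda\|_*^{p^*_\alpha-1}$ and exploiting the decay of $h$ through (\ref{ghu}). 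This is precisely where the hypotheses $\lim_{|x|\to 0}h=\lim_{|x|\to\infty}h=0$ enter the argument, and this step is missing from your write-up.
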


    The paper is organized as follows: In Section \ref{sectpmr} we deduce the optimizers of (CKN) inequality and characterize all solutions to the linearized Hardy-H\'{e}non equation (\ref{Pwhl}). In Section \ref{sect:rt}, we study the remainder term of (CKN) inequality (\ref{Pi}) and prove Theorem \ref{thmprt}. In Section \ref{sectprp} we investigate the existence of solutions to the related perturbed equation (\ref{Pwhp}) by using finite dimensional Lyapunov-Schmit reduction method and prove Theorem \ref{thmpwhp}.

\section{{\bfseries Optimizers of (CKN) inequality and linearized problem}}\label{sectpmr}

    In this section, at first, we use a suitable transform that is changing the variable $r\mapsto r^{\frac{2}{2-\alpha}}$, related to Sobolev inequality to investigate the sharp constants and optimizers of (CKN) inequality (\ref{Pi}) in radial space $D^{2,2}_{\alpha,rad}(\mathbb{R}^N)$.

\subsection{ Proof of Theorem \ref{thmbcm}.} We follow the arguments in the proof of \cite[Theorem A.1]{GGN13}. Let $u\in D^{2,2}_{\alpha,rad}(\mathbb{R}^N)$. Making the changes that $v(s)=u(r)$ and $r=s^q$ where $q>0$ will be given later, then we have
    \begin{equation*}
    \begin{split}
    \int_{\mathbb{R}^N}|x|^{\alpha}|\Delta u|^2 dx
    = & \omega_{N-1}\int^\infty_0 r^\alpha\left[u''(r)+\frac{N-1}{r}u'(r)\right]^2 r^{N-1}dr \\
    = & \omega_{N-1} q^{-3}\int^\infty_0\left[v''(s)+\frac{(N-1)q-(q-1)}{s}v'(s)\right]^2 s^{(N-1)q-3(q-1)+q \alpha}ds,
    \end{split}
    \end{equation*}
    where $\omega_{N-1}$ is the surface area for unit ball of $\mathbb{R}^N$.
    In order to make use of Sobolev inequality, we need $(N-1)q-(q-1)=(N-1)q-3(q-1)+q \alpha$ which requires \[q=\frac{2}{2-\alpha}.\] Now, we set
    \begin{equation}\label{defm}
    M:=(N-1)q-(q-1)+1=\frac{2(N-\alpha)}{2-\alpha}>4,
    \end{equation}
    which implies
    \begin{equation*}
    \begin{split}
    \int^\infty_0 r^\alpha\left[u''(r)+\frac{N-1}{r}u'(r)\right]^2 r^{N-1}dr
    = & q^{-3}\int^\infty_0\left[v''(s)+\frac{M-1}{s}v'(s)\right]^2 s^{M-1}ds.
    \end{split}
    \end{equation*}

    When $M$ is an integer we can use the classical Sobolev inequality (see \cite{Li85-1,Li85-2}) and we get
    \begin{equation*}
    \begin{split}
    \int^\infty_0\left[v''(s)+\frac{M-1}{s}v'(s)\right]^2 s^{M-1}ds
    \geq & C(M)\left(\int^\infty_0|v(s)|^{\frac{2M}{M-4}}s^{M-1}ds\right)^{\frac{M-4}{M}} \\
    = & q^{-\frac{M-4}{M}}C(M)\left(\int^\infty_0|u(r)|^{\frac{2M}{M-4}}r^{\frac{M}{q}-1}dr\right)^{\frac{M-4}{M}},
    \end{split}
    \end{equation*}
    where $C(M)=\pi^2(M+2)M(M-2)(M-4)\left(\Gamma(M/2)/\Gamma(M)\right)^{\frac{4}{M}}\left(2\pi^{M/2}/\Gamma(M/2)\right)^{-\frac{4}{M}}$ (see \cite[(1.4)]{Va93}). Moreover, even $M$ is not an integer we readily see that the above inequality remains true.

    From (\ref{defm}), we deduce that \[\frac{2M}{M-4}=\frac{2(N-\alpha)}{N-4+\alpha}=p^*_{\alpha},\quad \frac{M}{q}-1=N-1-\alpha.\] So we get
    \begin{equation*}
    \begin{split}
    & \int^\infty_0 r^\alpha\left[u''(r)+\frac{N-1}{s}u'(r)\right]^2 r^{N-1}dr\geq q^{-3-\frac{M-4}{M}}C(M)\left(\int^\infty_0r^{-\alpha}|u(r)|^{p^*_{\alpha}}r^{N-1}dr\right)^{\frac{2}{p^*_{\alpha}}},
    \end{split}
    \end{equation*}
    which proves (\ref{defbcs}) with
    \begin{equation*}
    \begin{split}
    S^{rad}(N,\alpha)
    = & q^{-3-\frac{M-4}{M}}\omega^{1-\frac{2}{p^*_{\alpha}}}_{N-1}C(M)
    =\left(\frac{2-\alpha}{2}\right)^{\frac{4N-4-2\alpha}{N-4}}
    \left(\frac{2\pi^{\frac{N}{2}}}{\Gamma(\frac{N}{2})}\right)^{\frac{4-2\alpha}{N-\alpha}}C\left(\frac{2N-2\alpha}{2-\alpha}\right).
    \end{split}
    \end{equation*}

    Moreover, from the previous inequalities, we also get that the extremal functions are obtained as
    \begin{equation*}
    \begin{split}
    \int^\infty_0\left[v_\nu''(s)+\frac{M-1}{s}v_\nu'(s)\right]^2 s^{M-1}ds=C(M)\left(\int^\infty_0|v_\nu(s)|^{\frac{2M}{M-4}}s^{M-1}ds\right)^{\frac{M-4}{M}}.
    \end{split}
    \end{equation*}
    It is well known that \[v_\nu(s)=A\nu^{\frac{M-4}{2}}(1+\nu^2s^2)^{-\frac{M-4}{2}}\] for all $A\in\mathbb{R}$ and $\nu\in\mathbb{R}^+$, see \cite[Theorem 2.1]{EFJ90}. Setting $\nu=\lambda^{1/q}$ and $s=|x|^{1/q}$, then we obtain that all the radial extremal functions of $S^{rad}(N,\alpha)$ have the form
    \begin{equation}\label{defula}
    V_{\lambda,\alpha}(x)=\frac{A\lambda^{\frac{N-4+\alpha}{2}}}{(1+\lambda^{2-\alpha}|x|^{2-\alpha})^{\frac{N-4+\alpha}{2-\alpha}}},
    \end{equation}
    for all $A\in\mathbb{R}$ and $\lambda>0$. The proof of Theorem \ref{thmbcm} is now complete.

    \qed

    Now, we are going to show the uniqueness of positive radial solutions of equation (\ref{Pwh}).

    Let $u(x)\in \mathcal{\mathcal{D}}^{2,2}_{\alpha,rad}(\mathbb{R}^N)$ be a positive radial solution of equation (\ref{Pwh}) and $X(s)=u(r)$ where $|x|=r=s^{q}$ and $q=2/(2-\alpha)$, then by simple calculation,  (\ref{Pwh}) is equivalent to
    \begin{equation}\label{PpwhlWe}
    \begin{split}
    & X^{(4)}(s)+\frac{2(M-1)}{s}X'''(s)+\frac{(M-1)(M-3)}{s^2}X''(s)-\frac{(M-1)(M-3)}{s^3}X'(s) \\
    = & q^{4}|X|^{\frac{8}{M-4}}X,\quad \mbox{in}\quad s\in(0,\infty)
    \end{split}
    \end{equation}
    where $M=\frac{2(N-\alpha)}{2-\alpha}>4$, since $p^*_{\alpha}=\frac{2M}{M-4}$. Then from \cite[Theorem 1.3]{Li98}, we know that equation (\ref{PpwhlWe}) has a unique (up to scalings) positive solution of the form
    \begin{equation}\label{eqsfe}
    X(s)=\frac{C_{M,q}\nu^{\frac{M-4}{2}}}{(1+\nu^{2}s^{2})^{\frac{M-4}{2}}},
    \end{equation}
    for some constant $\nu>0$, where $C_{M,q}=\left[q^{-4}(M-4)(M-2)M(M+2)\right]^{\frac{M-4}{8}}$. That is, equation (\ref{Pwh}) has a unique (up to scalings)
    radial solution of the form
    \begin{equation}\label{defulae}
    u(x)=\frac{C_{N,\alpha}\lambda^{\frac{N-4+\alpha}{2}}}{(1+\lambda^{2-\alpha}|x|^{2-\alpha})^\frac{N-4+\alpha}{2-\alpha}},
    \end{equation}
    for some $\lambda>0$, where $C_{N,\alpha}=\left[(N-4+\alpha)(N-2)(N-\alpha)(N+2-2\alpha)\right]^{\frac{N-4+\alpha}{8-4\alpha}}$. Therefore, Corollary \ref{thmpwh} holds.

    Then by using the standard spherical decomposition and taking the changes of variable $r\mapsto r^{\frac{2}{2-\alpha}}$, we can characterize all solutions to the linearized problem (\ref{Pwhl}).

\subsection{Proof of Theorem \ref{thmpwhl}.} We follow the arguments in the proof of \cite[Theorem 1.3]{GGN13}, and also \cite[Theorem 2.2]{LW00}. Equation (\ref{Pwhl}) is equivalent to
    \begin{equation}\label{Pwhlp}
    \Delta(|x|^{\alpha}\Delta v)=\frac{(p^*_{\alpha}-1)C_{N,\alpha}^{p^*_{\alpha}-2}}{|x|^{\alpha} (1+|x|^{2-\alpha})^4}v \quad \mbox{in}\quad \mathbb{R}^N, \quad v\in D^{2,2}_\alpha(\mathbb{R}^N).
    \end{equation}
    We will decompose the fourth-order equation (\ref{Pwhlp}) into a system of two second-order equations. Firstly, we decompose $v$ as follows:
    \begin{equation}\label{defvd}
    v(r,\theta)=\sum^{\infty}_{k=0}\phi_k(r)\Psi_k(\theta),\quad \mbox{where}\quad r=|x|,\quad \theta=\frac{x}{|x|}\in \mathbb{S}^{N-1},
    \end{equation}
    and
    \begin{equation*}
    \phi_k(r)=\int_{\mathbb{S}^{N-1}}v(r,\theta)\Psi_k(\theta)d\theta.
    \end{equation*}
    Here $\Psi_k(\theta)$ denotes the $k$-th spherical harmonic, i.e., it satisfies
    \begin{equation}\label{deflk}
    -\Delta_{\mathbb{S}^{N-1}}\Psi_k=\lambda_k \Psi_k,
    \end{equation}
    where $\Delta_{\mathbb{S}^{N-1}}$ is the Laplace-Beltrami operator on $\mathbb{S}^{N-1}$ with the standard metric and  $\lambda_k$ is the $k$-th eigenvalue of $-\Delta_{\mathbb{S}^{N-1}}$. It is well known that $\lambda_k=k(N-2+k)$, $k=0,1,2,\ldots$ whose multiplicity is \[\frac{(N+2k-2)(N+k-3)!}{(N-2)!k!}\] and that \[{\rm Ker}(\Delta_{\mathbb{S}^{N-1}}+\lambda_k)=\mathbb{Y}_k(\mathbb{R}^N)|_{\mathbb{S}^{N-1}},\] where $\mathbb{Y}_k(\mathbb{R}^N)$ is the space of all homogeneous harmonic polynomials of degree $k$ in $\mathbb{R}^N$. It is standard that $\lambda_0=0$ and the corresponding eigenfunction of (\ref{deflk}) is the constant function. The second eigenvalue $\lambda_1=N-1$ and the corresponding eigenfunctions of (\ref{deflk}) are $\frac{x_i}{|x|}$, $i=1,\ldots,N$.

    Moreover, let
    \begin{equation*}
    \psi_k(r)=-\int_{\mathbb{S}^{N-1}}|x|^\alpha\Delta v(r,\theta)\Psi_k(\theta) d\theta, \quad \mbox{i.e.,}\quad r^{-\alpha}\psi_k(r)=-\int_{\mathbb{S}^{N-1}}\Delta v(r,\theta)\Psi_k(\theta)d\theta.
    \end{equation*}
    It is known that
    \begin{equation}\label{Ppwhl2deflklw}
    \begin{split}
    \Delta (\varphi_k(r)\Psi_k(\theta))
    = & \Psi_k\left(\varphi''_k+\frac{N-1}{r}\varphi'_k\right)+\frac{\varphi_k}{r^2}\Delta_{\mathbb{S}^{N-1}}\Psi_k \\
    = & \Psi_k\left(\varphi''_k+\frac{N-1}{r}\varphi'_k-\frac{\lambda_k}{r^2}\varphi_k\right).
    \end{split}
    \end{equation}
    Therefore, by standard regularity theory, the function $v$ is a solution of (\ref{Pwhlp}) if and only if $(\phi_k,\psi_k)\in \mathcal{C}\times \mathcal{C}$ is a classical solution of the system
    \begin{eqnarray}\label{p2c}
    \left\{ \arraycolsep=1.5pt
       \begin{array}{ll}
        \phi''_k+\frac{N-1}{r}\phi'_k-\frac{\lambda_k}{r^2}\phi_k+\frac{\psi_k}{r^{\alpha}}=0 \quad \mbox{in}\quad r\in(0,\infty),\\[3mm]
        \psi''_k+\frac{N-1}{r}\psi'_k-\frac{\lambda_k}{r^2}\psi_k+\frac{(p^*_{\alpha}-1)C_{N,\alpha}^{p^*_{\alpha}-2}}{r^{\alpha} (1+r^{2-\alpha})^4}\phi_k=0 \quad \mbox{in}\quad r\in(0,\infty),\\[3mm]
        \phi'_k(0)=\psi'_k(0)=0 \quad\mbox{if}\quad k=0,\quad \mbox{and}\quad \phi_k(0)=\psi_k(0)=0 \quad\mbox{if}\quad k\geq 1,
        \end{array}
    \right.
    \end{eqnarray}
    where $\mathcal{C}:=\{\omega\in C^2([0,\infty))| \int^\infty_0 r^\alpha |\omega''(r)+\frac{N-1}{r}\omega'(r)|^2 r^{N-1} dr<\infty\}$. Take the same variation as in the proof of Theorem \ref{thmbcm}, $|x|=r=s^q$ where $q=2/(2-\alpha)$ and let
    \begin{equation}\label{p2txy}
    X_k(s)=\phi_k(r),\quad Y_k(s)=q^2\psi_k(r),
    \end{equation}
    that transforms (\ref{p2c}) into the system
    \begin{eqnarray}\label{p2t}
    \left\{ \arraycolsep=1.5pt
       \begin{array}{ll}
        X''_k+\frac{M-1}{s}X'_k-\frac{\lambda_kq^2}{s^2}X_k+Y_k=0 \quad \mbox{in}\quad s\in(0,\infty),\\[3mm]
        Y''_k+\frac{M-1}{s}Y'_k-\frac{\lambda_kq^2}{s^2}Y_k+\frac{(M+4)(M-2)M(M+2)}{(1+s^2)^4}X_k=0 \quad \mbox{in}\quad s\in(0,\infty),\\[3mm]
        X'_k(0)=Y'_k(0)=0 \quad\mbox{if}\quad k=0,\quad \mbox{and}\quad X_k(0)=Y_k(0)=0 \quad\mbox{if}\quad k\geq 1,
        \end{array}
    \right.
    \end{eqnarray}
    in $(X_k,Y_k)\in \widetilde{\mathcal{C}}\times \widetilde{\mathcal{C}}$, where $\widetilde{\mathcal{C}}:=\{\omega\in C^2([0,\infty))| \int^\infty_0 |\omega''(s)+\frac{M-1}{s}\omega'(s)|^2 s^{M-1} ds<\infty\}$ and
    \begin{equation}
    M=\frac{2(N-\alpha)}{2-\alpha}>4.
    \end{equation}
    Here we have used the fact
    \begin{equation*}
    q^4(p^*_{\alpha}-1)C_{N,\alpha}^{p^*_{\alpha}-2}=\left[(M-4)(M-2)M(M+2)\right]\left(\frac{2M}{M-4}-1\right)=(M+4)(M-2)M(M+2).
    \end{equation*}

    Fix $M$, then let us now consider the following eigenvalue problem
    \begin{eqnarray}\label{p2te}
    \left\{ \arraycolsep=1.5pt
       \begin{array}{ll}
        X''+\frac{M-1}{s}X'-\frac{\mu}{s^2}X+Y=0 \quad \mbox{in}\quad s\in(0,\infty),\\[3mm]
        Y''+\frac{M-1}{s}Y'-\frac{\mu}{s^2}Y+\frac{(M+4)(M-2)M(M+2)}{(1+s^2)^4}X=0 \quad \mbox{in}\quad s\in(0,\infty).
        \end{array}
    \right.
    \end{eqnarray}
    When $M$ is an integer we can study (\ref{p2te}) as the linearized problem of the equation
    \begin{equation*}
    \Delta^2 U=(M-4)(M-2)M(M+2) U^{\frac{M+4}{M-4}},\quad U>0 \quad\mbox{in}\quad \mathbb{R}^M,
    \end{equation*}
    around the standard solution $U(x)=(1+|x|^2)^{-\frac{M-4}{2}}$ (note that we always have $M>4$). In this case, as in \cite[Theorem 2.2]{LW00}, we have that
    \begin{equation}\label{ptev}
    \mu_0=0; \quad \mu_1=M-1\quad \mbox{and}\quad X_0(s)=\frac{1-s^2}{(1+s^2)^{\frac{M-2}{2}}}; \quad X_1(s)=\frac{s}{(1+s^2)^{\frac{M-2}{2}}}.
    \end{equation}
    Moreover, even $M$ is not an integer we readily see that (\ref{ptev}) remains true. Therefore, we can conclude that (\ref{p2t}) has nontrivial solutions if and only if
    \begin{equation*}
    q^2\lambda_k\in \{0,M-1\},\quad \mbox{i.e.,}\quad \frac{4\lambda_k}{(2-\alpha)^2}\in \left\{0,\frac{2N-2-\alpha}{2-\alpha}\right\},
    \end{equation*}
    where $\lambda_k=k(N-2+k)$, $k\in\mathbb{N}$. If $4\lambda_k/(2-\alpha)^2=0$ then $k=0$. Moreover, if \[\frac{4\lambda_k}{(2-\alpha)^2}=\frac{2N-2-\alpha}{2-\alpha},\] then 
    \begin{equation*}
    \left[\alpha+2(k-1)\right]\left[\alpha-2(N+k-1)\right]=0,
    \end{equation*}
    we obtain $\alpha=-2(k-1)$ since $4-N<\alpha<2$. Turning back to (\ref{p2c}) we obtain the solutions
    \begin{equation}\label{pyf}
    \phi_0(r)=\frac{1-r^{2-\alpha}}{(1+r^{2-\alpha})^{\frac{N-2}{2-\alpha}}} \quad\mbox{if}\quad \alpha\neq-2(k-1),\quad \forall k\in\mathbb{N}^+,
    \end{equation}
    and
    \begin{equation}\label{pye}
    \phi_0(r)=\frac{1-r^{2-\alpha}}{(1+r^{2-\alpha})^{\frac{N-2}{2-\alpha}}},\quad
    \phi_k(r)=\frac{r^k}{(1+r^{2-\alpha})^{\frac{N-2}{2-\alpha}}} \quad\mbox{if}\quad \alpha=-2(k-1),
    \end{equation}
    for some $k\in\mathbb{N}^+$. That is, if $\alpha$ is not a negative even integer, then the space of solutions of (\ref{Pwhlp}) has dimension $1$ and is spanned by
    \begin{equation*}
    Z_0(x)=\frac{1-|x|^{2-\alpha}}{(1+|x|^{2-\alpha})^\frac{N-2}{2-\alpha}}.
    \end{equation*}
    If $\alpha=-2(k-1)$ for some $k\in\mathbb{N}^+$, then the space of solutions of (\ref{Pwhlp}) has dimension $1+\frac{(N+2k-2)(N+k-3)!}{(N-2)!k!}$ and is spanned by
    \begin{equation*}
    Z_0(x)=\frac{1-|x|^{2-\alpha}}{(1+|x|^{2-\alpha})^\frac{N-2}{2-\alpha}},\quad Z_{k,i}(x)=\frac{|x|^k\Psi_{k,i}(x)}{(1+|x|^{2-\alpha})^\frac{N-2}{2-\alpha}},
    \end{equation*}
    where $\{\Psi_{k,i}\}$, $i=1,\ldots,\frac{(N+2k-2)(N+k-3)!}{(N-2)!k!}$, form a basis of $\mathbb{Y}_k(\mathbb{R}^N)$, the space of all homogeneous harmonic polynomials of degree $k$ in $\mathbb{R}^N$. The proof of Theorem \ref{thmpwhl} is now complete.

    \qed

\section{{\bfseries Remainder terms of (CKN) inequality}}\label{sect:rt}

In this section, we consider the remainder terms of (CKN) inequality (\ref{Pi}) in radial space $D^{2,2}_{\alpha,rad}(\mathbb{R}^N)$ and give the proof of Theorem \ref{thmprt}.  We follow the arguments as those in \cite{BE91}, and also \cite{LW00}.

We define $u_\lambda(x):=\lambda^{\frac{N-4+\alpha}{2}}u(\lambda x)$ for all $\lambda>0$. Thus for simplicity of notations, we write $U_\lambda$ instead of $U_{\lambda,\alpha}$ and $S_\alpha$ instead of $S^{rad}(N,\alpha)$ if there is no possibility of confusion. Moreover, in order to shorten formulas we denote
    \begin{equation}\label{def:norm}
    \begin{split}
    \|\varphi\|: & =\|\varphi\|_{D^{2,2}_\alpha(\mathbb{R}^N)}=\left(\int_{\mathbb{R}^N}|x|^{\alpha}|\Delta \varphi|^2 dx\right)^{1/2}, \quad \mbox{for}\quad \varphi\in D^{2,2}_\alpha(\mathbb{R}^N),  \\
    \|\varphi\|_*: & =\|\varphi\|_{L^{p^*_\alpha}_\alpha(\mathbb{R}^N)}= \left(\int_{\mathbb{R}^N}|x|^{-\alpha}|\varphi|^{p^*_{\alpha}} dx\right)^{1/p^*_{\alpha}},\quad \mbox{for}\quad \varphi\in L^{p^*_\alpha}_\alpha(\mathbb{R}^N).
    \end{split}
    \end{equation}

    Consider the eigenvalue problem
    \begin{equation}\label{Pwhlep}
    \Delta(|x|^{\alpha}\Delta v)=\mu|x|^{-\alpha} U_{\lambda}^{p^*_{\alpha}-2}v \quad \mbox{in}\quad \mathbb{R}^N, \quad v\in D^{2,2}_\alpha(\mathbb{R}^N).
    \end{equation}
    By a simple scaling argument, we have that $\mu$ does not depending on $\lambda$. Moreover, from Theorem \ref{thmpwhl} we have:
    \begin{proposition}\label{propep}
    Assume $N\geq 3$, and let $4-N<\alpha<2$ be not a negative even integer. Let $\mu_i$, $i=1,2,\ldots,$ denote the eigenvalues of (\ref{Pwhlep}) in increasing order. Then $\mu_1=1$ is simple with eigenfunction $U_\lambda$ and $\mu_2=p^*_{\alpha}-1$ with the corresponding one-dimensional eigenfunction space spanned by $\{\frac{\partial U_\lambda}{\partial \lambda}\}$. Furthermore, eigenvalues do not depend on $\lambda$, and $\mu_3>\mu_2$.
    \end{proposition}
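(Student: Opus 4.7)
The plan is to combine the variational characterization of $\mu_1$ with the kernel characterization of Theorem \ref{thmpwhl} and with the spherical harmonic / change of variable reduction $r=s^q$, $q=2/(2-\alpha)$, that drove the proofs of Theorems \ref{thmbcm} and \ref{thmpwhl}, now applied with $p^*_\alpha-1$ replaced by a free spectral parameter.

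First I would verify the scaling invariance and the discreteness of the spectrum. Direct computation shows that $v(x)\mapsto\lambda^{(N-4+\alpha)/2}v(\lambda x)$ is a linear isometry of $(D^{2,2}_\alpha,\|\cdot\|)$ sending $U_1$ to $U_\lambda$ and intertwining (\ref{Pwhlep}) at $U_1$ with (\ref{Pwhlep}) at $U_\lambda$, so eigenvalues do not depend on $\lambda$ and WLOG $\lambda=1$. Moreover, the bilinear form $v\mapsto\int_{\mathbb{R}^N}|x|^{-\alpha}U_1^{p^*_\alpha-2}v\varphi\,dx$ induces a compact, self-adjoint, positive operator on $D^{2,2}_\alpha$ (compactness from the rapid decay of $U_1^{p^*_\alpha-2}$ combined with the CKN embedding), hence the spectrum of (\ref{Pwhlep}) is discrete with $\mu_n\to+\infty$.

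Equation (\ref{Pwh}) immediately gives $U_1$ as an eigenfunction for $\mu=1$, and differentiating (\ref{Pwh}) in $\lambda$ at $\lambda=1$ shows that $w:=\partial U_\lambda/\partial\lambda\big|_{\lambda=1}\sim Z_0\in D^{2,2}_\alpha$ is an eigenfunction for $\mu=p^*_\alpha-1$. To pin down $\mu_1=1$ with simplicity, I would use the Rayleigh quotient together with the CKN inequality $\|v\|^2\geq S^{rad}\|v\|_*^2$, H\"older's inequality $\int_{\mathbb{R}^N}|x|^{-\alpha}U_1^{p^*_\alpha-2}v^2\,dx\leq\|U_1\|_*^{p^*_\alpha-2}\|v\|_*^2$, and the identity $S^{rad}=\|U_1\|_*^{p^*_\alpha-2}$, which follows by testing (\ref{Pwh}) against $U_1$ and invoking the extremality of $U_1$ in $S^{rad}$. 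This yields $\mu_1\geq 1$, with equality at $U_1$; equality in both CKN and H\"older then forces $v\propto U_1$ (the first via Theorem \ref{thmbcm}, the second via matching the saturation profiles), which gives simplicity.

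The main obstacle is locating $\mu_2=p^*_\alpha-1$ and proving the strict gap $\mu_3>\mu_2$. Here I would repeat the decomposition $v=\sum_{k\geq 0}\phi_k(r)\Psi_k(\theta)$ and the substitution $r=s^q$ exactly as in the proof of Theorem \ref{thmpwhl}, but with a free spectral parameter $\mu$ in place of $p^*_\alpha-1$. For each mode $k$ the resulting two-component ODE system in $s$ is the radial component of the linearized biharmonic Sobolev eigenvalue problem in the (possibly non-integer) dimension $M=2(N-\alpha)/(2-\alpha)$ on the angular mode determined by $\lambda_kq^2$. In the zeroth mode $k=0$, the Bianchi--Egnell-type spectrum computed in \cite{LW00} (extended to real $M>4$ by the very ODE argument used in Theorem \ref{thmbcm}) yields the radial eigenvalues $1$, $p^*_\alpha-1=(M+4)/(M-4)$ and a strict third eigenvalue. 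For $k\geq 1$ the lowest eigenvalue of the $k$-th mode equals $p^*_\alpha-1$ if and only if $\lambda_kq^2=M-1$, which—exactly as at the end of the proof of Theorem \ref{thmpwhl}—forces $\alpha=-2(k-1)$, and this case is excluded by hypothesis. The genuine difficulty is the non-integer-dimension biharmonic spectral analysis, which I would handle by the same ODE techniques used in Section \ref{sectpmr}.
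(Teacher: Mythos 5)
Your plan fills in a proof that the paper essentially omits (the paper only says the proposition follows ``from Theorem \ref{thmpwhl}'' plus scaling), and most of its ingredients are sound; but there is a genuine gap at the decisive step, namely the modes $k\geq 1$. You argue that the lowest eigenvalue of the $k$-th mode \emph{equals} $p^*_\alpha-1$ iff $\lambda_kq^2=M-1$, i.e.\ iff $\alpha=-2(k-1)$, and conclude that excluding this case yields $\mu_2=p^*_\alpha-1$ and $\mu_3>\mu_2$. Excluding equality, however, does not exclude modal eigenvalues \emph{strictly below} $p^*_\alpha-1$, and these do occur. Write $\nu=\lambda_kq^2$, $\Delta_\nu=\partial_{ss}+\frac{M-1}{s}\partial_s-\frac{\nu}{s^2}$, and let $\mu_{\min}(\nu)$ be the infimum of $\int_0^\infty|\Delta_\nu X|^2s^{M-1}ds\big/\bigl(c_M\int_0^\infty X^2(1+s^2)^{-4}s^{M-1}ds\bigr)$ with $c_M=(M-4)(M-2)M(M+2)$. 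Expanding $|\Delta_\nu X|^2$ and integrating by parts, the coefficient of $\nu$ in the numerator is $2\int_0^\infty\frac{(X')^2}{s^2}s^{M-1}ds+2(M-4)\int_0^\infty\frac{X^2}{s^4}s^{M-1}ds>0$, so $\mu_{\min}$ is strictly increasing on $\nu\geq0$, with $\mu_{\min}(0)=1$ (attained by the profile of $U_1$) and $\mu_{\min}(M-1)=\frac{M+4}{M-4}=p^*_\alpha-1$ (attained by $s(1+s^2)^{-(M-2)/2}$). Since $\lambda_1q^2-(M-1)$ has the sign of $\alpha$, for every $\alpha\in(4-N,0)$ the $k=1$ mode contributes an eigenvalue of (\ref{Pwhlep}) lying strictly in $(1,p^*_\alpha-1)$, with non-radial eigenfunction $\phi(r)\Psi_1(\theta)$. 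Hence the assertion $\mu_2=p^*_\alpha-1$ cannot be established in the full space $D^{2,2}_\alpha(\mathbb{R}^N)$ when $\alpha<0$; this step of your argument is not merely incomplete but unrepairable there. (For $0<\alpha<2$ all modes $k\geq1$ satisfy $\lambda_kq^2>M-1$ and your conclusion does hold in the full space.)

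The resolution, consistent with the only place the proposition is used (Lemma \ref{lemma:rtnm2b}, where the functions $w_n$ are radial), is to pose (\ref{Pwhlep}) on $D^{2,2}_{\alpha,rad}(\mathbb{R}^N)$. Then the spherical-harmonic decomposition is unnecessary: only your $k=0$ analysis is needed, and the substitution $r=s^{2/(2-\alpha)}$ transfers the problem to the radial spectrum of $\Delta^2V=\mu\,c_M(1+|x|^2)^{-4}V$ in dimension $M=\frac{2(N-\alpha)}{2-\alpha}>4$, whose first three radial eigenvalues are $1<\frac{M+4}{M-4}<\frac{(M+6)(M+4)}{(M-2)(M-4)}$, the first two simple with eigenfunctions corresponding to $U_1$ and $\partial_\lambda U_\lambda|_{\lambda=1}$, exactly as in \cite{LW00} (extended to non-integer $M$ by the ODE argument of Section \ref{sectpmr}). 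Note also that your Rayleigh-quotient proof of $\mu_1=1$ already presupposes the radial setting, since it invokes the CKN inequality with the radial constant $S^{rad}(N,\alpha)$, which is not valid for general $v\in D^{2,2}_\alpha(\mathbb{R}^N)$. With the restriction to the radial class your plan becomes correct and complete.
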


    The main ingredient in the proof of Theorem \ref{thmprt} is contained in the lemma below, where the behavior near  $\mathcal{M}_2=\{cU_{\lambda,\alpha}: c\in\mathbb{R}, \lambda>0\}$ is studied.

    \begin{lemma}\label{lemma:rtnm2b}
    Assume $N\geq 3$, and let $4-N<\alpha<2$ be not a negative even integer. Then for any sequence $\{u_n\}\subset D^{2,2}_{\alpha,rad}(\mathbb{R}^N)\backslash \mathcal{M}_2$ such that $\inf_n\|u_n\|>0$ and ${\rm dist}(u_n,\mathcal{M}_2)\to 0$, we have
    \begin{equation}\label{rtnmb}
    \lim\inf_{n\to\infty}\frac{\|u_n\|^2-S_\alpha\|u_n\|^2_*}{{\rm dist}(u_n,\mathcal{M}_2)^2}\geq 1-\frac{\mu_2}{\mu_3},
    \end{equation}
    where $\mu_2=p^*_\alpha-1<\mu_3$ are given as in Proposition \ref{propep}.
    \end{lemma}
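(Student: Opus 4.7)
The plan is to follow the Bianchi--Egnell approach \cite{BE91} (in its Lu--Wei form \cite{LW00} for fourth-order equations) adapted to our weighted radial setting: exploit the two-parameter symmetry of the functional to normalize the configuration, Taylor expand both the Dirichlet and weighted $L^{p^*_{\alpha}}_\alpha$ terms to quadratic order, and close the estimate with the spectral gap $\mu_3 > \mu_2$ provided by Proposition \ref{propep}.

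First, I would observe that the quotient $(\|u\|^2 - S_\alpha \|u\|_*^2)/{\rm dist}(u,\mathcal{M}_2)^2$ is invariant under the two-parameter symmetry $u(x) \mapsto t \lambda^{(N-4+\alpha)/2} u(\lambda x)$ with $t,\lambda > 0$, since each of $\|\cdot\|^2$, $\|\cdot\|_*^2$ and ${\rm dist}(\cdot,\mathcal{M}_2)^2$ is homogeneous of degree $2$ in $t$ and invariant in $\lambda$. Since ${\rm dist}(u_n,\mathcal{M}_2) \to 0$ while $\inf_n \|u_n\| > 0$, the infimum defining ${\rm dist}(u_n,\mathcal{M}_2)$ is attained at some pair $(c_n,\lambda_n)$ with $c_n,\lambda_n$ bounded and bounded away from $0$; for large $n$ this follows from the implicit function theorem applied to the nearest-point projection onto the smooth two-dimensional submanifold $\mathcal{M}_2$. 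After applying the symmetry I may therefore assume $c_n = \lambda_n = 1$, and I write $u_n = U_1 + r_n$ with $\|r_n\| = {\rm dist}(u_n,\mathcal{M}_2) \to 0$. The first-order optimality conditions in $c$ and $\lambda$ yield
\begin{equation*}
\langle r_n, U_{1,\alpha} \rangle_\alpha = 0, \qquad \langle r_n, \partial_\lambda U_{\lambda,\alpha}|_{\lambda = 1} \rangle_\alpha = 0,
\end{equation*}
so by Theorem \ref{thmpwhl}, with $\alpha$ not an even integer, $r_n$ is $\langle\cdot,\cdot\rangle_\alpha$-orthogonal to both eigenspaces of \eqref{Pwhlep} at $\lambda=1$ associated with $\mu_1 = 1$ and $\mu_2 = p^*_{\alpha} - 1$.

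Next, Pythagoras gives $\|u_n\|^2 = \|U_1\|^2 + \|r_n\|^2$. For the dual norm, a pointwise Taylor expansion of $|U_1 + r_n|^{p^*_{\alpha}}$, integration against $|x|^{-\alpha}$, and the cancellation of the linear term via equation \eqref{Pwh} together with the first orthogonality relation produce
\begin{equation*}
\|u_n\|_*^{p^*_{\alpha}} = \|U_1\|_*^{p^*_{\alpha}} + \frac{p^*_{\alpha}(p^*_{\alpha} - 1)}{2}\int_{\mathbb{R}^N} |x|^{-\alpha} U_1^{p^*_{\alpha} - 2}\, r_n^2\, dx + o(\|r_n\|^2).
\end{equation*}
Using the identity $\|U_1\|_*^{p^*_{\alpha}} = \|U_1\|^2 = S_\alpha \|U_1\|_*^2$, obtained by testing \eqref{Pwh} against $U_1$, together with $(1+t)^{2/p^*_{\alpha}} = 1 + 2t/p^*_{\alpha} + o(t)$, I arrive at
\begin{equation*}
\|u_n\|^2 - S_\alpha \|u_n\|_*^2 = \|r_n\|^2 - (p^*_{\alpha} - 1)\int_{\mathbb{R}^N} |x|^{-\alpha} U_1^{p^*_{\alpha} - 2}\, r_n^2\, dx + o(\|r_n\|^2).
\end{equation*}
I then fix an $\langle\cdot,\cdot\rangle_\alpha$-orthonormal basis $\{\phi_i\}$ of radial eigenfunctions of \eqref{Pwhlep} normalized so that $\int |x|^{-\alpha} U_1^{p^*_{\alpha} - 2}\, \phi_i \phi_j\, dx = \delta_{ij}/\mu_i$; by the two orthogonality conditions, $r_n = \sum_{i \geq 3} a_{n,i}\, \phi_i$, hence
\begin{equation*}
(p^*_{\alpha} - 1) \int_{\mathbb{R}^N} |x|^{-\alpha} U_1^{p^*_{\alpha} - 2}\, r_n^2\, dx = \mu_2 \sum_{i \geq 3} \frac{a_{n,i}^2}{\mu_i} \leq \frac{\mu_2}{\mu_3}\|r_n\|^2,
\end{equation*}
and dividing by $\|r_n\|^2 = {\rm dist}(u_n,\mathcal{M}_2)^2$ and letting $n \to \infty$ yields \eqref{rtnmb}.

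The main obstacle is the rigorous justification of the $o(\|r_n\|^2)$ remainder in the Taylor expansion of $\|u_n\|_*^{p^*_{\alpha}}$, since $p^*_{\alpha}$ may lie anywhere in $(2,\infty)$ and in particular in $(2,3)$, where the third derivative of $t \mapsto |t|^{p^*_{\alpha}}$ is unbounded. I would handle this via the elementary pointwise inequality
\begin{equation*}
\bigl||a + b|^{p} - |a|^p - p|a|^{p-2}ab - \tfrac{p(p-1)}{2}|a|^{p-2}b^2\bigr| \leq C_p\bigl(|a|^{p-3}|b|^3\, \mathbf{1}_{\{p \geq 3\}} + |b|^{p}\, \mathbf{1}_{\{p < 3\}}\bigr),
\end{equation*}
combined with the continuous embedding $D^{2,2}_{\alpha,rad}(\mathbb{R}^N) \hookrightarrow L^{p^*_{\alpha}}_{\alpha}(\mathbb{R}^N)$ supplied by Theorem \ref{thmbcm} to absorb the cubic contribution into $o(\|r_n\|^2)$. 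A secondary but subsidiary point is the smoothness of the nearest-point projection onto $\mathcal{M}_2$, which is guaranteed by the linear independence of the tangent vectors $U_1$ and $\partial_\lambda U_{\lambda,\alpha}|_{\lambda = 1}$; this linear independence follows from Theorem \ref{thmpwhl} precisely because $\alpha$ is not an even integer.
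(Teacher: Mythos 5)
Your proposal follows essentially the same Bianchi--Egnell/Lu--Wei scheme as the paper: attainment of the distance by some $(c_n,\lambda_n)$, orthogonality of the remainder to the tangent space of $\mathcal{M}_2$, Pythagoras plus a second-order Taylor expansion of the weighted $L^{p^*_\alpha}_\alpha$ norm with the linear term cancelled, and the spectral gap $\mu_2<\mu_3$ from Proposition \ref{propep}. The differences are minor and complementary: you normalize $(c_n,\lambda_n)=(1,1)$ via the scaling symmetry and are more careful than the paper about the $o(\|r_n\|^2)$ Taylor remainder when $p^*_\alpha<3$, whereas the paper is more careful than you about why the minimizing parameters cannot escape to $\lambda\to 0$ or $\lambda\to\infty$ (it proves $\langle u_n,U_\lambda\rangle_\alpha\to 0$ in those limits), a compactness point that your appeal to the implicit function theorem for the nearest-point projection onto the non-compact manifold $\mathcal{M}_2$ glosses over.
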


    \begin{proof}
    Let $d_n:={\rm dist}(u_n,\mathcal{M}_2)=\inf_{c\in\mathbb{R}, \lambda>0}\|u_n-cU_\lambda\|\to 0$. We know that for each $u_n\in D^{2,2}_{\alpha,rad}(\mathbb{R}^N)$, there exist $c_n\in\mathbb{R}$ and $\lambda_n>0$ such that $d_n=\|u_n-c_nU_{\lambda_n}\|$. In fact,
    \begin{equation}\label{ikeda}
    \begin{split}
    \|u_n-cU_\lambda\|^2
    = & \|u_n\|^2+c^2\|U_\lambda\|^2-2c\langle u_n,U_\lambda\rangle_\alpha \\
    \geq & \|u_n\|^2+c^2\|U_1\|^2-2|c|\|u_n\| \|U_1\|.
    \end{split}
    \end{equation}
    Thus the minimizing sequence of $d_n^2$, say $\{c_{n,m},\lambda_{n,m}\}$, must satisfying $|c_{n,m}|\leq C$ which means $\{c_{n,m}\}$ is bounded.
    On the other hand,
    \begin{equation*}
    \begin{split}
    \left|\int_{|\lambda x|\leq \rho}|x|^{\alpha}\Delta u_n\Delta U_\lambda dx\right|
    \leq & \int_{|y|\leq \rho}|y|^{\alpha}|\Delta (u_n)_{\frac{1}{\lambda}}(y)||\Delta U_1(y)| dy \\
    \leq & \|u_n\|\left(\int_{|y|\leq \rho}|y|^{\alpha}|\Delta U_1|^2 dy\right)^{1/2} \\
    = & o_\rho(0)
    \end{split}
    \end{equation*}
    as $\rho\to 0$ which is uniform for $\lambda>0$, where $(u_n)_{\frac{1}{\lambda}}(y)=\lambda^{-\frac{N-4+\alpha}{2}}u_n(\lambda^{-1}y)$, and
    \begin{equation*}
    \begin{split}
    \left|\int_{|\lambda x|\geq \rho}|x|^{\alpha}\Delta u_n\Delta U_\lambda dx \right|
    \leq & \|U_1\|\left(\int_{|x|\geq \frac{\rho}{\lambda}}|x|^{\alpha}|\Delta u_n|^2 dy\right)^{1/2}
    =  o_\lambda(0)
    \end{split}
    \end{equation*}
    as $\lambda\to 0$ for any fixed $\rho>0$. By taking $\lambda\to 0$ and then $\rho\to 0$, we obtain
    \[\left|\int_{\mathbb{R}^N}|x|^{\alpha}\Delta u_n\Delta U_\lambda dx\right| \to 0\quad \mbox{as}\quad \lambda\to 0.\]
    Moreover, by the explicit form of $U_\lambda$ we have
    \begin{equation*}
    \begin{split}
    \left|\int_{|\lambda x|\leq R}|x|^{\alpha}\Delta u_n\Delta U_\lambda dx \right|
    \leq & \|U_1\|\left(\int_{| x|\leq \frac{R}{\lambda}}|x|^{\alpha}|\Delta u_n|^2 dx\right)^{1/2}
    = o_\lambda(0)
    \end{split}
    \end{equation*}
    as $\lambda\to +\infty$ for any fixed $R>0$ and
    \begin{equation*}
    \begin{split}
    \left|\int_{|\lambda x|\geq R}|x|^{\alpha}\Delta u_n\Delta U_\lambda dx\right|
    \leq & \int_{|y|\geq R}|y|^{\alpha}|\Delta (u_n)_{\frac{1}{\lambda}}(y)||\Delta U_1(y)| dy \\
    \leq & \|u_n\|\left(\int_{|y|\geq R}|y|^{\alpha}|\Delta U_1|^2 dy\right)^{1/2}
    =  o_R(0)
    \end{split}
    \end{equation*}
    as $R\to +\infty$ which is uniform for $\lambda>0$. Thus, by taking first $\lambda\to +\infty$ and then $R\to +\infty$, we also obtain
    \[\left|\int_{\mathbb{R}^N}|x|^{\alpha}\Delta u_n\Delta U_\lambda dx\right| \to 0\quad \mbox{as}\quad \lambda\to +\infty.\]
    It follows from (\ref{ikeda}) and $d_n\to 0$, $\inf_n\|u_n\|>0$ that the minimizing sequence $\{c_{n,m},\lambda_{n,m}\}$ must satisfying $|\lambda_{n,m}|\leq C$ which means $\{\lambda_{n,m}\}$ is bounded. Thus for each $u_n\in D^{2,2}_{\alpha,rad}(\mathbb{R}^N)$, $d_n^2$ can be attained by some $c_n\in\mathbb{R}$ and $\lambda_n>0$.

    Since $\mathcal{M}_2$ is two-dimensional manifold embedded in $D^{2,2}_{\alpha,rad}(\mathbb{R}^N)$, that is
    \[
    (c,\lambda)\in\mathbb{R}\times\mathbb{R}_+\to cU_\lambda\in D^{2,2}_{\alpha,rad}(\mathbb{R}^N),
    \]
    then the tangential space at $(c_n,\lambda_n)$ is given by
    \[
    T_{c_n U_{\lambda_n}}\mathcal{M}_2={\rm Span}\left\{U_{\lambda_n}, \frac{\partial U_\lambda}{\partial \lambda}\Big|_{\lambda=\lambda_n}\right\},
    \]
    and we must have that $(u_n-c_n U_{\lambda_n})$ is perpendicular to $T_{c_n U_{\lambda_n}}\mathcal{M}_2$. Proposition \ref{propep} implies that
    \begin{equation}\label{epkeyibbg}
    \mu_3\int_{\mathbb{R}^N}|x|^{-\alpha}U_{\lambda_n}^{p^*_{\alpha}-2}(u_n-c_n U_{\lambda_n})^2 \leq \int_{\mathbb{R}^N}|x|^\alpha |\Delta (u_n-c_n U_{\lambda_n})|^2.
    \end{equation}
    Let $u_n=c_n U_{\lambda_n}+d_n w_n$, then $w_n$ is perpendicular to $T_{c_n U_{\lambda_n}}\mathcal{M}_2$, $\|w_n\|=1$ and we can rewrite (\ref{epkeyibbg}) as follows:
    \begin{equation}\label{epkeyibbb}
    \int_{\mathbb{R}^N}|x|^{-\alpha}U_{\lambda_n}^{p^*_{\alpha}-2}w_n^2\leq \frac{1}{\mu_3}.
    \end{equation}
    Furthermore,
    \begin{equation*}
    \|u_n\|^2=d_n^2+c_n^2\|U_{\lambda_n}\|^2,
    \end{equation*}
    and by  using Taylor's expansion, it holds that
    \begin{equation}\label{epkeyiybb}
    \begin{split}
    \int_{\mathbb{R}^N}|x|^{-\alpha}|u_n|^{p^*_{\alpha}}
    = & \int_{\mathbb{R}^N}|x|^{-\alpha}|c_n U_{\lambda_n}+d_nw_n|^{p^*_{\alpha}} \\
    = & |c_n|^{p^*_{\alpha}}\int_{\mathbb{R}^N}|x|^{-\alpha}U_{\lambda_n}^{p^*_{\alpha}}
    +d_n p^*_{\alpha}|c_n|^{p^*_{\alpha}-1}\int_{\mathbb{R}^N}|x|^{-\alpha}U_{\lambda_n}^{p^*_{\alpha}-1}w_n  \\
    & +\frac{p^*_{\alpha}(p^*_{\alpha}-1)d_n^2  |c_n|^{p^*_{\alpha}-2} }{2}\int_{\mathbb{R}^N}|x|^{-\alpha}U_{\lambda_n}^{p^*_{\alpha}-2}w_n^2
    +o(d_n^2)  \\
    = & |c_n|^{p^*_{\alpha}}\int_{\mathbb{R}^N}|x|^{-\alpha}U_{\lambda_n}^{p^*_{\alpha}}  \\
    & + \frac{p^*_{\alpha}(p^*_{\alpha}-1)d_n^2  |c_n|^{p^*_{\alpha}-2} }{2} \int_{\mathbb{R}^N}|x|^{-\alpha}U_{\lambda_n}^{p^*_{\alpha}-2}w_n^2
    +o(d_n^2),
    \end{split}
    \end{equation}
    since
    \begin{equation*}
    \int_{\mathbb{R}^N}|x|^{-\alpha}U_{\lambda_n}^{p^*_{\alpha}-1}w_n=\int_{\mathbb{R}^N}|x|^\alpha \Delta U_{\lambda_n} \Delta w_n=0.
    \end{equation*}
    Then combining with (\ref{epkeyibbb}) and (\ref{epkeyiybb}), we obtain
    \begin{equation}\label{epkeyiyxbb}
    \begin{split}
    \|u_n\|^{2}_*
    \leq & \left(|c_n|^{p^*_{\alpha}}\|U_{\lambda_n}\|^{p^*_{\alpha}}_*+\frac{p^*_{\alpha}(p^*_{\alpha}-1)d_n^2  |c_n|^{p^*_{\alpha}-2} }{2\mu_3}
    +o(d_n^2)\right)^{\frac{2}{p^*_{\alpha}}} \\
    = & c_n^2\left(\|U_{\lambda_n}\|^{p^*_{\alpha}}_*+\frac{p^*_{\alpha}(p^*_{\alpha}-1)d_n^2  c_n^{-2}}{2\mu_3}
    +o(d_n^2)\right)^{\frac{2}{p^*_{\alpha}}}  \\
    = & c_n^2\left(\|U_{\lambda_n}\|^2_*+\frac{2}{p^*_{\alpha}}\frac{p^*_{\alpha}(p^*_{\alpha}-1)d_n^2  c_n^{-2}}{2\mu_3} \|U_{\lambda_n}\|^{2-p_\alpha^*}_*
    +o(d^2)\right) \\
    = & c_n^2\|U_{\lambda_n}\|^2_*+ \frac{d_n^2 (p^*_{\alpha}-1)}{\mu_3}\|U_{\lambda_n}\|^{2-p_\alpha^*}_*+o(d_n^2).
    \end{split}
    \end{equation}
    Therefore,
    \begin{equation}\label{epkeyiydzbb}
    \begin{split}
    \|u_n\|^2-S_\alpha\|u_n\|^{2}_*
    \geq & d_n^2+c_n^2\|U_{\lambda_n}\|^2- S_\alpha\left(c_n^2\|U_{\lambda_n}\|^2_*+ \frac{d_n^2 (p^*_{\alpha}-1)}{\mu_3}\|U_{\lambda_n}\|^{2-p_\alpha^*}_*+o(d_n^2)\right)  \\
    = & d_n^2 \left(1-\frac{p^*_\alpha-1}{\mu_3} S_\alpha \|U_{\lambda_n}\|_*^{2-p^*_\alpha}\right)
    +c_n^2(\|U_{\lambda_n}\|^2- S_\alpha\|U_{\lambda_n}\|^2_*)+o(d_n^2)  \\
    = & d_n^2\left(1-\frac{p^*_\alpha-1}{\mu_3}\right)+o(d_n^2),
    \end{split}
    \end{equation}
    which holds for all $n\in\mathbb{N}$ since $\|U\|^2=\|U\|_*^{p^*_\alpha}=S_\alpha^{p^*_\alpha/(p^*_\alpha-2)}$ and $\|U\|^2=S_\alpha\|U\|_*^2$ for all $U\in\mathcal{M}_2$, then (\ref{rtnmb}) follows immediately.
    \end{proof}

    \noindent{\bf Proof of Theorem \ref{thmprt}.} We argue by contradiction. In fact, if the theorem is false then there exists a sequence $\{u_n\}\subset D^{2,2}_{\alpha,rad}(\mathbb{R}^N)\backslash \mathcal{M}_2$ such that
    \begin{equation*}
    \frac{\|u_n\|^2-S_\alpha\|u_n\|^2_*}{{\rm dist}(u_n,\mathcal{M}_2)^2}\to 0,\quad \mbox{as}\quad n\to \infty.
    \end{equation*}
    By homogeneity, we can assume that $\|u_n\|=1$, and after selecting a subsequence we can assume that ${\rm dist}(u_n,\mathcal{M}_2)\to \xi\in[0,1]$ since ${\rm dist}(u_n,\mathcal{M}_2)=\inf_{c\in\mathbb{R}, \lambda>0}\|u_n-cU_{\lambda}\|\leq \|u_n\|$. If $\xi=0$, then we have a contradiction by Lemma \ref{lemma:rtnm2b}.

    The other possibility only is that $\xi>0$, that is
    \[{\rm dist}(u_n,\mathcal{M}_2)\to \xi>0\quad \mbox{as}\quad n\to \infty,\]
    then we must have
    \begin{equation}\label{wbsi}
    \|u_n\|^2-S_\alpha\|u_n\|^2_*\to 0,\quad \|u_n\|=1.
    \end{equation}
    Since $\{u_n\}\subset D^{2,2}_{\alpha,rad}(\mathbb{R}^N)\backslash \mathcal{M}_2$ are radial, making the changes that $v_n(s)=u_n(r)$ and $r=s^{2/(2-\alpha)}$, then (\ref{wbsi}) is equivalent to
    \begin{equation}\label{bsiy}
    \begin{split}
    \int^\infty_0\left[v_n''(s)+\frac{M-1}{s}v_n'(s)\right]^2 s^{M-1}ds
    -C(M)\left(\int^\infty_0|v_n(s)|^{\frac{2M}{M-4}}s^{M-1}ds\right)^{\frac{M-4}{M}}\to 0
    \end{split}
    \end{equation}
    where $M=\frac{2(N-\alpha)}{2-\alpha}>4$ and $C(M)=(M-4)(M-2)M(M+2)\left[\Gamma^2(\frac{M}{2})/(2\Gamma(M))\right]^{\frac{4}{M}}$, see the proof of Theorem \ref{thmbcm}. When $M$ is an integer  (\ref{bsiy}) is equivalent to
    \begin{equation}\label{bsib}
    \begin{split}
    \int_{\mathbb{R}^M}|\Delta v_n|^2 dx
    -S(M)\left(\int_{\mathbb{R}^M}|v_n|^{\frac{2M}{M-4}}dx\right)^{\frac{M-4}{M}}\to 0,\quad \|v_n\|_{D^{2,2}_{0}(\mathbb{R}^M)}=\left(\frac{2}{2-\alpha}\right)^{3/2},
    \end{split}
    \end{equation}
    where $S(M)=\pi^2(M-4)(M-2)M(M+2)\left[\Gamma(\frac{M}{2})/\Gamma(M)\right]^{\frac{4}{M}}$ is the best constant for the embedding of the space $D^{2,2}_0(\mathbb{R}^M)$ into $L^{2M/(M-4)}(\mathbb{R}^M)$, see \cite{Va93}. In this case, by Lions' concentration and compactness principle (see \cite[Theorem \uppercase\expandafter{\romannumeral 2}.4]{Li85-1}) as those in \cite{LW00}, we have that there exists a sequence of positive numbers $\lambda_n$ such that
    \begin{equation*}
    \lambda_n^{\frac{M-4}{2}}v_n(\lambda_n x)\to V\quad \mbox{in}\quad D^{2,2}_0(\mathbb{R}^M)\quad \mbox{as}\quad n\to \infty,
    \end{equation*}
    where $V(x)=c(a+|x|^2)^{-(M-4)/2}$ for some $c\neq 0$ and $a>0$, that is
    \begin{equation*}
    \tau_n^{\frac{N-4+\alpha}{2}}u_n(\tau_n x)\to U\quad \mbox{in}\quad D^{2,2}_\alpha(\mathbb{R}^N)\quad \mbox{as}\quad n\to \infty,
    \end{equation*}
    for some sequence $\{\tau_n\}$ and $U\in\mathcal{M}_2$, which implies
    \begin{equation*}
    {\rm dist}(u_n,\mathcal{M}_2)={\rm dist}\left(\tau_n^{\frac{N-4+\alpha}{2}}u_n(\tau_n x),\mathcal{M}_2\right)\to 0 \quad \mbox{as}\quad n\to \infty,
    \end{equation*}
    this is a contradiction. Moreover, even $M$ is not an integer we can also get analogous contradiction.
    \qed

\section{{\bfseries Finite-dimensional reduction}}\label{sectprp}

    In this section, we consider perturbation problem (\ref{Pwhp}) and give the proof of Theorem \ref{thmpwhp} by using Finite-dimensional reduction method. We always suppose that $4-N<\alpha<2$ and $\alpha$ is not a negative even integer.

    Given $h\in L^\infty(\mathbb{R}^N)\cap C(\mathbb{R}^N)$, we put
    \begin{equation}\label{defH}
    H[u]=\frac{1}{p^*_{\alpha}}\int_{\mathbb{R}^N}h(x)|x|^{-\alpha} u^{p^*_{\alpha}}_+dx.
    \end{equation}
    For $\varepsilon\in\mathbb{R}$ we introduce the perturbed energy functional $\mathcal{J}_\varepsilon$ and also the unperturbed energy functional $\mathcal{J}_0$ on $D^{2,2}_{\alpha,rad}(\mathbb{R}^N)$ given by
    \begin{equation*}
    \begin{split}
    \mathcal{J}_\varepsilon[u]=\mathcal{J}_0[u]-\varepsilon H[u]
    =\frac{1}{2}\int_{\mathbb{R}^N}|x|^{\alpha}|\Delta u|^2dx-\frac{1}{p^*_{\alpha}}\int_{\mathbb{R}^N}(1+\varepsilon h(x))|x|^{-\alpha} u^{p^*_{\alpha}}_+dx.
    \end{split}
    \end{equation*}
    Evidently, $\mathcal{J}_\varepsilon\in \mathcal{C}^2$ and any critical point $u$ of $\mathcal{J}_\varepsilon$ is a weak solution to
    \begin{equation*}
    \Delta(|x|^{\alpha}\Delta u)=(1+\varepsilon h(x))|x|^{-\alpha} u^{p^*_{\alpha}-1}_+.
    \end{equation*}
    If $u\neq 0$ and $|\varepsilon|\|h\|_\infty\leq 1$, then $u$ is positive by the strong maximum principle. Hence, $u$ solves (\ref{Pwhp}).

    Define now
    \begin{equation}\label{deful}
    \mathcal{U}:=\left\{U_{\lambda}(x)=\lambda^{\frac{N-4+\alpha}{2}}U_{1}(\lambda x)\big| \lambda>0\right\},
    \end{equation}
    and
    \begin{equation}\label{defri0}
    \mathcal{E}:=\left\{\omega\in D^{2,2}_{\alpha,rad}(\mathbb{R}^N): \langle\omega,\frac{\partial U_{\lambda}}{\partial \lambda}\rangle_\alpha=\int_{\mathbb{R}^N}|x|^{\alpha}\Delta \omega\Delta \frac{\partial U_{\lambda}}{\partial \lambda} dx=0\quad \mbox{for all}\quad \lambda>0 \right\}.
    \end{equation}
    For $\lambda>0$, we define the map $P_\lambda: D^{2,2}_\alpha(\mathbb{R}^N) \to D^{2,2}_\alpha(\mathbb{R}^N)$ by
    \begin{equation*}
    P_\lambda(u):=\lambda^{\frac{N-4+\alpha}{2}}u(\lambda x).
    \end{equation*}
    We can check that $P_\lambda$ converses the norms $\|\cdot\|$ and $\|\cdot\|_*$ (see the definitions as in (\ref{def:norm})), thus for every $\lambda>0$
    \begin{equation}\label{defplp}
    (P_\lambda)^{-1}=(P_\lambda)^t=P_{\lambda^{-1}}\quad \mbox{and}\quad \mathcal{J}_0=\mathcal{J}_0\circ P_\lambda,
    \end{equation}
    where $(P_\lambda)^t$ denotes the adjoint of $P_\lambda$. Twice differentiating the identity $\mathcal{J}_0=\mathcal{J}_0\circ P_\lambda$ yields for all $u, \varphi, \psi\in D^{2,2}_\alpha(\mathbb{R}^N)$
    \begin{equation*}
    (\mathcal{J}''_0[u]\phi,\psi)=(\mathcal{J}''_0[P_\lambda(u)]P_\lambda(\phi),P_\lambda(\psi)),
    \end{equation*}
    that is
    \begin{equation}\label{defpft}
    \mathcal{J}''_0[u]=(P_\lambda)^{-1}\circ\mathcal{J}''_0[P_\lambda(u)]\circ P_\lambda,\quad \forall u\in D^{2,2}_\alpha(\mathbb{R}^N).
    \end{equation}
    Differentiating (\ref{defplp}) we see that $P(\lambda,u):=P_\lambda(u)$ maps $(0,\infty)\times \mathcal{U}$ into $\mathcal{U}$, hence
    \begin{equation}\label{defpw}
    \frac{\partial P}{\partial u}(\lambda,u): T_u \mathcal{U}\to T_{P_\lambda(u)}\mathcal{U}\quad \mbox{and}\quad P_\lambda: (T_u \mathcal{U})^\perp\to (T_{P_\lambda(u)}\mathcal{U})^\perp.
    \end{equation}
    From Theorem \ref{thmpwhl}, we know that the manifold $\mathcal{U}$ is non-degenerate, then take the same argument as in \cite[Corollary 3.2]{FS03}, we can know that $\mathcal{J}''_0[U_1]$ is a self-adjoint Fredholm operator of index zero which maps the space $D^{2,2}_\alpha(\mathbb{R}^N)$ into $T_{U_1}\mathcal{U}^\perp$, and $\mathcal{J}''_0[U_1]\in \mathfrak{L}(T_{U_1}\mathcal{U}^\perp)$ is invertible. Consequently, using (\ref{defpft}) and (\ref{defpw}), we obtain in this case
    \begin{equation}\label{defpr}
    \|\mathcal{J}''_0[U_1]\|_{\mathfrak{L}(T_{U_1}\mathcal{U}^\perp)}=\|\mathcal{J}''_0[U]\|_{\mathfrak{L}(T_{U}\mathcal{U}^\perp)},\quad \forall U\in \mathcal{U}.
    \end{equation}

    \begin{lemma}\label{lemhw}
    Suppose that $h\in L^\infty(\mathbb{R}^N)\cap C(\mathbb{R}^N)$, then there exists a constant $C_1=C_1(\|h\|_\infty,\alpha,\lambda)>0$ such that for any $\lambda>0$ and for any $\omega\in D^{2,2}_{\alpha,rad}(\mathbb{R}^N)$,
    \begin{equation}\label{gh0}
    |H[U_{\lambda}+\omega]|\leq C_1(\||h|^{\frac{1}{p^*_\alpha}}U_{\lambda}\|^{p^*_\alpha}_*+\|\omega\|^{p^*_\alpha}),
    \end{equation}
    \begin{equation}\label{gh1}
    \|H'[U_{\lambda}+\omega]\|\leq C_1(\||h|^{\frac{1}{p^*_\alpha}}U_{\lambda}\|^{p^*_\alpha-1}_*+\|\omega\|^{p^*_\alpha-1}),
    \end{equation}
    \begin{equation}\label{gh2}
    \|H''[U_{\lambda}+\omega]\|\leq C_1(\||h|^{\frac{1}{p^*_\alpha}}U_{\lambda}\|^{p^*_\alpha-2}_*+\|\omega\|^{p^*_\alpha-2}).
    \end{equation}
    Moreover, if $\lim_{|x|\to 0}h(x)=\lim_{|x|\to \infty}h(x)=0$ then
    \begin{equation}\label{ghu}
    \||h|^{\frac{1}{p^*_\alpha}}U_{\lambda}\|_*\to 0 \quad\mbox{as}\quad \lambda\to 0\quad\mbox{or}\quad \lambda\to \infty.
    \end{equation}
    \end{lemma}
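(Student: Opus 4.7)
The plan is to derive \eqref{gh0}--\eqref{gh2} by direct H\"older-type estimates on the integrals defining $H$ and its first and second Gateaux derivatives
\begin{equation*}
H'[u]\varphi=\int_{\mathbb{R}^N}h(x)|x|^{-\alpha}u_+^{p^*_\alpha-1}\varphi\,dx,\qquad H''[u](\varphi,\psi)=(p^*_\alpha-1)\int_{\mathbb{R}^N}h(x)|x|^{-\alpha}u_+^{p^*_\alpha-2}\varphi\psi\,dx,
\end{equation*}
combined with the elementary inequality $(a+b)^r\le 2^r(a^r+b^r)$ valid for $r>0$, $a,b\ge 0$, and the (CKN) embedding $D^{2,2}_{\alpha,rad}(\mathbb{R}^N)\hookrightarrow L^{p^*_\alpha}_{\alpha}(\mathbb{R}^N)$ provided by Theorem~\ref{thmbcm}. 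The limit \eqref{ghu} then falls out of the scaling invariance built into $U_\lambda$ together with Lebesgue's dominated convergence theorem.

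For \eqref{gh0}, I would first apply the pointwise bound $(U_\lambda+\omega)_+^{p^*_\alpha}\le C(U_\lambda^{p^*_\alpha}+|\omega|^{p^*_\alpha})$; the $U_\lambda$-integral is exactly $\||h|^{1/p^*_\alpha}U_\lambda\|_*^{p^*_\alpha}$, while the $\omega$-integral is controlled by $\|h\|_\infty\|\omega\|_*^{p^*_\alpha}\le C\|h\|_\infty\|\omega\|^{p^*_\alpha}$ via Theorem~\ref{thmbcm}. For \eqref{gh1} I split similarly by $(U_\lambda+\omega)_+^{p^*_\alpha-1}\le C(U_\lambda^{p^*_\alpha-1}+|\omega|^{p^*_\alpha-1})$ and apply H\"older with conjugate exponents $p^*_\alpha/(p^*_\alpha-1)$ and $p^*_\alpha$, distributing both the weight $|x|^{-\alpha}=|x|^{-\alpha(p^*_\alpha-1)/p^*_\alpha}|x|^{-\alpha/p^*_\alpha}$ and the weight $|h|=|h|^{(p^*_\alpha-1)/p^*_\alpha}|h|^{1/p^*_\alpha}$ asymmetrically. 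This yields the $U_\lambda$-contribution bounded by $\||h|^{1/p^*_\alpha}U_\lambda\|_*^{p^*_\alpha-1}\,\||h|^{1/p^*_\alpha}\varphi\|_*\le C\|h\|_\infty^{1/p^*_\alpha}\||h|^{1/p^*_\alpha}U_\lambda\|_*^{p^*_\alpha-1}\|\varphi\|$ and the $\omega$-contribution bounded by $C\|h\|_\infty\|\omega\|^{p^*_\alpha-1}\|\varphi\|$; taking the supremum over $\|\varphi\|\le 1$ closes \eqref{gh1}. Estimate \eqref{gh2} is strictly analogous, using the trilinear H\"older with exponents $p^*_\alpha/(p^*_\alpha-2),\,p^*_\alpha,\,p^*_\alpha$, which is legitimate because $p^*_\alpha-2=(8-4\alpha)/(N-4+\alpha)>0$ on the admissible range $4-N<\alpha<2$.

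For \eqref{ghu}, the change of variable $y=\lambda x$, together with the identity $U_\lambda(x)^{p^*_\alpha}=\lambda^{N-\alpha}U_1(\lambda x)^{p^*_\alpha}$ following from the explicit form of $U_{\lambda,\alpha}$ in Corollary~\ref{thmpwh}, cancels all powers of $\lambda$ outside $h$ and yields the scale-reduced representation
\begin{equation*}
\bigl\||h|^{1/p^*_\alpha}U_\lambda\bigr\|_*^{p^*_\alpha}=\int_{\mathbb{R}^N}|h(y/\lambda)|\,|y|^{-\alpha}U_1(y)^{p^*_\alpha}\,dy.
\end{equation*}
The majorant $\|h\|_\infty|y|^{-\alpha}U_1(y)^{p^*_\alpha}$ lies in $L^1(\mathbb{R}^N)$: near the origin $U_1$ is bounded and $|y|^{-\alpha}$ is integrable because $\alpha<N$, while at infinity $U_1(y)\sim|y|^{-(N-4+\alpha)}$ gives $|y|^{-\alpha}U_1^{p^*_\alpha}\sim|y|^{-(2N-\alpha)}$, again integrable since $\alpha<N$. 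For each fixed $y\ne 0$, $|y/\lambda|\to\infty$ as $\lambda\to 0$ and $|y/\lambda|\to 0$ as $\lambda\to\infty$, so the hypothesis $\lim_{|x|\to 0}h(x)=\lim_{|x|\to\infty}h(x)=0$ forces $h(y/\lambda)\to 0$ a.e. in both regimes. Dominated convergence then concludes.

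The whole argument is largely mechanical; the only point that requires genuine care is the asymmetric distribution of $|h|$ in the H\"older step, in which the factor $|h|^{1/p^*_\alpha}$ must be kept attached to each occurrence of $\varphi$ (and $\psi$) so that the $U_\lambda$-piece of the bound appears as a power of $\||h|^{1/p^*_\alpha}U_\lambda\|_*$ rather than as $\|U_\lambda\|_*$ multiplied by a power of $\|h\|_\infty$. The latter would be useless for the forthcoming finite-dimensional reduction, because $\|U_\lambda\|_*$ is independent of $\lambda$ by scale invariance, whereas the former vanishes as $\lambda\to 0$ or $\lambda\to\infty$ thanks precisely to \eqref{ghu}.
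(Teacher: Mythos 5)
Your proposal is correct and follows essentially the same route as the paper: H\"older's inequality with the weight $|h|$ split asymmetrically so that a power of $\||h|^{1/p^*_\alpha}U_\lambda\|_*$ survives, the (CKN) embedding to convert $\|\cdot\|_*$ into $\|\cdot\|$, and the scaling identity $\int |h(x)|\,|x|^{-\alpha}U_\lambda^{p^*_\alpha}\,dx=\int |h(y/\lambda)|\,|y|^{-\alpha}U_1^{p^*_\alpha}\,dy$ plus dominated convergence for \eqref{ghu}. The only (immaterial) difference is that you split $(U_\lambda+\omega)_+$ pointwise before applying H\"older, whereas the paper applies H\"older first and then the triangle inequality on $\||h|^{1/p^*_\alpha}(U_\lambda+\omega)\|_*$.
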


    \begin{proof}
    We will only show (\ref{gh2}) as (\ref{gh0})-(\ref{gh1}) follow analogously. By H\"{o}lder's inequality and since the embedding $D^{2,2}_{\alpha,rad}(\mathbb{R}^N)\hookrightarrow L^{p^*_{\alpha}}_{\alpha}(\mathbb{R}^N)$ is continuous, we have
    \begin{equation*}
    \begin{split}
    \|H''[U_{\lambda}+\omega]\|
    \leq & (p^*_\alpha-1)\sup_{\|g_1\|,\|g_2\|\leq 1} \int_{\mathbb{R}^N}\frac{|h(x)||U_{\lambda}+\omega|^{p^*_{\alpha}-2}|g_1||g_2|}{|x|^{\alpha}}dx\\
    \leq & (p^*_\alpha-1)\|h\|^{\frac{2}{p^*_\alpha}}_\infty \sup_{\|g_1\|,\|g_2\|\leq 1} \||h|^{\frac{1}{p^*_\alpha}}(U_{\lambda}+\omega)\|^{p^*_\alpha-2}_*\|g_1\|_*\|g_2\|_* \\
    \leq & c(\|h\|_\infty,\alpha,\lambda)\||h|^{\frac{1}{p^*_\alpha}}(U_{\lambda}+\omega)\|^{p^*_\alpha-2}_*.
    \end{split}
    \end{equation*}
    Then by using the triangle inequality and again $D^{2,2}_{\alpha,rad}(\mathbb{R}^N)\hookrightarrow L^{p^*_{\alpha}}_{\alpha}(\mathbb{R}^N)$, we can directly obtain (\ref{gh2}).

    Under the additional assumption $\lim_{|x|\to 0}h(x)=\lim_{|x|\to \infty}h(x)=0$, (\ref{ghu}) follows by the dominated convergence theorem and
    \begin{equation*}
    \int_{\mathbb{R}^N}\frac{|h(x)|U_{\lambda}^{p^*_{\alpha}}}{|x|^\alpha}dx=\int_{\mathbb{R}^N}\frac{|h(\lambda^{-1} x)|U_1^{p^*_{\alpha}}}{|x|^\alpha}dx.
    \end{equation*}
    \end{proof}

    In order to deal with the problem $\mathcal{J}'_\varepsilon[u]=0$ for $\varepsilon$ close to zero, we combine variational methods with the Lyapunov-Schmit reduction method, in the spirit of \cite{AGP99}, and also \cite{FS03,MN21}. The next lemma is the crucial step.
    \begin{lemma}\label{lemreg}
    Suppose that $h\in L^\infty(\mathbb{R}^N)\cap C(\mathbb{R}^N)$, then there exist constants $\varepsilon_0$, $C_2>0$ and a smooth function
    \begin{equation*}
    \omega=\omega(\lambda,\varepsilon):(0,\infty)\times(-\varepsilon_0,\varepsilon_0)\to D^{2,2}_{\alpha,rad}(\mathbb{R}^N)
    \end{equation*}
    such that for any $\lambda>0$ and $\varepsilon\in (-\varepsilon_0,\varepsilon_0)$,
    \begin{equation}\label{we}
    \omega(\lambda,\varepsilon)\in \mathcal{E},
    \end{equation}
    \begin{equation}\label{jes}
    \mathcal{J}'_\varepsilon[U_\lambda+\omega(\lambda,\varepsilon)]\eta=0,\quad \forall \eta\in \mathcal{E},
    \end{equation}
    \begin{equation}\label{wgx}
    \|\omega(\lambda,\varepsilon)\|\leq C_2|\varepsilon|.
    \end{equation}
    Moreover, if $\lim_{|x|\to 0}h(x)=\lim_{|x|\to \infty}h(x)=0$ then
    \begin{equation}\label{wgt0}
    \|\omega(\lambda,\varepsilon)\|\to 0 \quad\mbox{as}\quad \lambda\to 0\quad\mbox{or}\quad \lambda\to \infty,
    \end{equation}
    uniformly with respect to $\varepsilon$.
    \end{lemma}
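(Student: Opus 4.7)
The plan is the standard Lyapunov–Schmidt finite-dimensional reduction driven by the implicit function theorem. For fixed $\lambda > 0$, I would consider the map
\[
G_\lambda(\omega, \varepsilon) := \Pi_\lambda \mathcal{J}'_\varepsilon[U_\lambda + \omega] \in T_{U_\lambda}\mathcal{U}^\perp,
\]
where $\Pi_\lambda$ denotes the orthogonal projection of $D^{2,2}_{\alpha,rad}(\mathbb{R}^N)$ onto $T_{U_\lambda}\mathcal{U}^\perp$ with respect to $\langle \cdot,\cdot\rangle_\alpha$, and seek $\omega$ with $G_\lambda(\omega,\varepsilon) = 0$. At $(\omega,\varepsilon) = (0,0)$, $G_\lambda$ vanishes since $U_\lambda$ is a critical point of $\mathcal{J}_0$ (Corollary \ref{thmpwh}), and $\partial_\omega G_\lambda(0,0) = \Pi_\lambda \mathcal{J}''_0[U_\lambda]\big|_{T_{U_\lambda}\mathcal{U}^\perp}$. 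By Theorem \ref{thmpwhl} (invoking here the hypothesis that $\alpha$ is not an even integer), $\ker \mathcal{J}''_0[U_\lambda] = \operatorname{span}\{\partial_\lambda U_\lambda\} = T_{U_\lambda}\mathcal{U}$; combined with the Fredholm/self-adjoint structure discussed around (\ref{defpr}), this guarantees that the restriction to $T_{U_\lambda}\mathcal{U}^\perp$ is a topological isomorphism.

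To make the implicit function theorem yield $\varepsilon_0$ independent of $\lambda$, I would exploit the isometric scaling $P_\lambda$ from (\ref{defpft})--(\ref{defpr}). It sends $U_1 \mapsto U_\lambda$, $T_{U_1}\mathcal{U}^\perp \mapsto T_{U_\lambda}\mathcal{U}^\perp$, and satisfies $\mathcal{J}''_0[U_\lambda] = (P_\lambda)^{-1}\mathcal{J}''_0[U_1] P_\lambda$. Writing $\omega = P_\lambda \widetilde\omega$, the equation $G_\lambda = 0$ transforms into
\[
\Pi_1\bigl(\mathcal{J}'_0[U_1 + \widetilde\omega] - \varepsilon H'_{\lambda}[U_1 + \widetilde\omega]\bigr) = 0,
\]
where $H_{\lambda}[u] := \tfrac{1}{p^*_\alpha}\int h(x/\lambda)|x|^{-\alpha} u_+^{p^*_\alpha}\,dx$. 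Since $\|h(\cdot/\lambda)\|_\infty = \|h\|_\infty$, the estimates (\ref{gh0})--(\ref{gh2}) from Lemma \ref{lemhw}, applied with $h$ replaced by $h(\cdot/\lambda)$, give bounds on $H_\lambda, H'_\lambda, H''_\lambda$ on a ball around $U_1$ that are uniform in $\lambda > 0$. The standard contraction estimate behind the IFT then produces a smooth $\widetilde\omega(\lambda,\varepsilon)$ for $|\varepsilon| < \varepsilon_0$ with $\varepsilon_0$ independent of $\lambda$; setting $\omega(\lambda,\varepsilon) := P_\lambda \widetilde\omega(\lambda,\varepsilon)$ yields (\ref{we})--(\ref{jes}). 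The bound (\ref{wgx}) follows from the first-order expansion
\[
\omega(\lambda,\varepsilon) = -\varepsilon\bigl[\Pi_\lambda \mathcal{J}''_0[U_\lambda]\big|_{T_{U_\lambda}\mathcal{U}^\perp}\bigr]^{-1}\Pi_\lambda H'[U_\lambda] + O(\varepsilon^2),
\]
combined with the uniform-in-$\lambda$ bound $\|H'[U_\lambda]\| \leq C(\|h\|_\infty)$ from (\ref{gh1}) and the uniform bound on the inverse from (\ref{defpr}).

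For the refined limit (\ref{wgt0}), I would rewrite the equation as the fixed-point identity
\[
\omega = \bigl[\Pi_\lambda \mathcal{J}''_0[U_\lambda]\big|_{T_{U_\lambda}\mathcal{U}^\perp}\bigr]^{-1}\Pi_\lambda\bigl(\varepsilon H'[U_\lambda+\omega] - \mathcal{N}(\omega)\bigr),
\]
where $\mathcal{N}(\omega) := \mathcal{J}'_0[U_\lambda+\omega] - \mathcal{J}'_0[U_\lambda] - \mathcal{J}''_0[U_\lambda]\omega$ is the nonlinear Taylor remainder. Using (\ref{gh1}) and the fact that $p^*_\alpha > 2$ (so $\mathcal{N}(\omega) = o(\|\omega\|)$ and $\|\omega\|^{p^*_\alpha - 1}$ is higher order as $\omega \to 0$), I obtain
\[
\|\omega\| \leq C|\varepsilon|\,\bigl\||h|^{1/p^*_\alpha} U_\lambda\bigr\|_*^{p^*_\alpha - 1} + C\bigl(|\varepsilon|\,\|\omega\|^{p^*_\alpha - 1} + \|\omega\|^{\min(2,p^*_\alpha-1)}\bigr).
\]
Since (\ref{wgx}) ensures $\|\omega\|$ is already small uniformly, the second bracket absorbs into the left-hand side after shrinking $\varepsilon_0$ if needed, leaving $\|\omega\| \leq C'|\varepsilon|\,\||h|^{1/p^*_\alpha} U_\lambda\|_*^{p^*_\alpha-1}$; by (\ref{ghu}) the right-hand side tends to zero as $\lambda \to 0$ or $\lambda \to \infty$, uniformly in $\varepsilon \in (-\varepsilon_0,\varepsilon_0)$.

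The main obstacle is precisely the $\lambda$-uniformity of the implicit function theorem: a direct application produces only a $\lambda$-dependent $\varepsilon_0$. The resolution is the scaling reduction $\omega = P_\lambda \widetilde\omega$, which transfers the $\lambda$-dependence from the unperturbed linearized operator (where controlling it at the endpoints $\lambda \to 0, \infty$ would be delicate) into the weight $h(\cdot/\lambda)$ of the perturbation, where the tailored bounds of Lemma \ref{lemhw} deliver the required uniform estimates.
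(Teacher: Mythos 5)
Your proposal is correct and follows essentially the same route as the paper: a Lyapunov--Schmidt reduction made uniform in $\lambda$ by conjugating with the isometries $P_\lambda$, with the perturbation controlled by the estimates of Lemma \ref{lemhw} and the limit (\ref{wgt0}) extracted from (\ref{ghu}). The only cosmetic differences are that the paper encodes the orthogonality constraint via a Lagrange-multiplier--augmented map $G(\lambda,\omega,l,\varepsilon)$ and obtains (\ref{wgx}) and (\ref{wgt0}) by choosing the radius of the invariant ball for the contraction and invoking uniqueness of the fixed point, whereas you use the orthogonal projection $\Pi_\lambda$ and a first-order expansion plus absorption; both are standard equivalent implementations of the same argument.
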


    \begin{proof}
    Define $G: (0,\infty)\times D^{2,2}_{\alpha,rad}(\mathbb{R}^N)\times \mathbb{R}\times \mathbb{R}\to D^{2,2}_{\alpha,rad}(\mathbb{R}^N)\times \mathbb{R}$
    \begin{equation*}
    G(\lambda,\omega,l,\varepsilon):=(\mathcal{J}'_\varepsilon[U_\lambda+\omega]-l \dot{\xi}_\lambda,(\omega,\dot{\xi}_\lambda)),
    \end{equation*}
    where $\dot{\xi}_\lambda$ denotes the normalized tangent vector $\frac{d}{d\lambda}U_\lambda$.
    We observe
    \begin{equation}\label{fmt}
    \begin{split}
    \left(\left(\frac{\partial G}{\partial (\omega,l)}(\lambda,0,0,0)\right)(\omega,l), (\mathcal{J}''_0[U_\lambda]\omega-l \dot{\xi}_\lambda,(\omega,\dot{\xi}_\lambda))\right)=\|\mathcal{J}''_0[U_\lambda]\omega\|^2+l^2+|(\omega,\dot{\xi}_\lambda)|^2,
    \end{split}
    \end{equation}
    where
    \begin{equation*}
    \begin{split}
    \left(\frac{\partial G}{\partial (\omega,l)}(\lambda,0,0,0)\right)(\omega,l)= (\mathcal{J}''_0[U_\lambda]\omega-l \dot{\xi}_\lambda,(\omega,\dot{\xi}_\lambda)).
    \end{split}
    \end{equation*}
    From the invertibility of $\mathcal{J}''_0[U_1]$ we infer that $\frac{\partial G}{\partial (\omega,l)}(\lambda,0,0,0)$ is an injective Fredholm operator of index zero, hence invertible and by (\ref{defpr}) and (\ref{fmt}) we obtain
    \begin{equation}\label{gpn}
    \begin{split}
    \left\|\left(\frac{\partial G}{\partial (\omega,l)}(\lambda,0,0,0)\right)^{-1}\right\|
    \leq & \max\{1, \|(\mathcal{J}''_\varepsilon[U_\lambda])^{-1}\|\} \\
    = & \max\{1, \|(\mathcal{J}''_\varepsilon[U_1])^{-1}\|\}=:C_*.
    \end{split}
    \end{equation}
    If $G(\lambda,\omega,l,\varepsilon)=(0,0)$ for some $l\in\mathbb{R}$ then $\omega$ satisfies (\ref{we})-(\ref{jes}), and $G(\lambda,\omega,l,\varepsilon)=(0,0)$ if and only if $(\omega,l)=F_{\lambda,\varepsilon}(\omega,l)$, where
    \begin{equation*}
    F_{\lambda,\varepsilon}(\omega,l):=-\left(\frac{\partial G}{\partial (\omega,l)}(\lambda,0,0,0)\right)^{-1}G(\lambda,\omega,l,\varepsilon)+(\omega,l).
    \end{equation*}
    We will prove that $F_{\lambda,\varepsilon}$ is a contraction map in some ball $B_\rho$, where we may choose the radius $\rho=\rho(\varepsilon)>0$ independent of $U\in\mathcal{U}$. Here for $(\omega,l)\in B_\rho$, it means $\omega\in D^{2,2}_{\alpha,rad}(\mathbb{R}^N)$ and $l\in\mathbb{R}$ satisfying $\|(\omega,l)\|:=(\|\omega\|^2+l^2)^{1/2}\leq \rho$.

    Suppose $(\omega,l)\in B_\rho$. From (\ref{defpft}) and (\ref{gpn}), we can obtain that
    \begin{equation}\label{fnl}
    \begin{split}
    \left\|F_{\lambda,\varepsilon}(\omega,l)\right\|
    \leq & C_* \left\|G(\lambda,\omega,l,\varepsilon)-\left(\frac{\partial G}{\partial (\omega,l)}(\lambda,0,0,0)\right)(\omega,l)\right\| \\
    \leq & C_* \|\mathcal{J}'_\varepsilon[U_\lambda+\omega]-\mathcal{J}''_0[U_\lambda]\omega\| \\
    \leq & C_* \int^1_0\|\mathcal{J}''_0[U_\lambda+t\omega]-\mathcal{J}''_0[U_\lambda]\|dt \|\omega\|
    + C_*|\varepsilon| \|H'[U_{\lambda}+\omega]\| \\
    \leq & C_* \int^1_0\|\mathcal{J}''_0[U_1+tP_{\lambda^{-1}}(\omega)]-\mathcal{J}''_0[U_1]\|dt \|\omega\|
    + C_*|\varepsilon| \|H'[U_{\lambda}+\omega]\| \\
    \leq & C_* \rho \sup_{\|\omega\|\leq\rho} \|\mathcal{J}''_0[U_1+\omega]-\mathcal{J}''_0[U_1]\|
    + C_*|\varepsilon| \sup_{\|\omega\|\leq\rho}\|H'[U_{\lambda}+\omega]\|.
    \end{split}
    \end{equation}
    Analogously, for $(\omega_1,l_1), (\omega_2,l_2)\in B_\rho$ we get
    \begin{equation}\label{fnpl}
    \begin{split}
    \frac{\left\|F_{\lambda,\varepsilon}(\omega_1,l_1)-F_{\lambda,\varepsilon}(\omega_2,l_2)\right\|}{C_*\|\omega_1-\omega_2\|}
    \leq & \frac{\|\mathcal{J}'_\varepsilon[U_\lambda+\omega_1]-\mathcal{J}'_\varepsilon[U_\lambda+\omega_2]-\mathcal{J}''_0[U_\lambda](\omega_1-\omega_2)\|}{\|\omega_1-\omega_2\|} \\
    \leq & \int^1_0\|\mathcal{J}''_\varepsilon[U_\lambda+\omega_2+t(\omega_1-\omega_2)]-\mathcal{J}''_0[U_\lambda]\|dt  \\
    \leq & \int^1_0\|\mathcal{J}''_0[U_\lambda+\omega_2+t(\omega_1-\omega_2)]-\mathcal{J}''_0[U_\lambda]\|dt  \\
    & + |\varepsilon| \int^1_0\|H''[U_{\lambda}+\omega_2+t(\omega_1-\omega_2)]\|dt \\
    \leq & \sup_{\|\omega\|\leq 3\rho} \|\mathcal{J}''_0[U_1+\omega]-\mathcal{J}''_0[U_1]\| \\
    &+ |\varepsilon| \sup_{\|\omega\|\leq 3\rho}\|H''[U_{\lambda}+\omega]\|.
    \end{split}
    \end{equation}
    We may choose $\rho_0>0$ such that
    \begin{equation*}
    C_*\sup_{\|\omega\|\leq 3\rho_0} \|\mathcal{J}''_0[U_1+\omega]-\mathcal{J}''_0[U_1]\|\leq\frac{1}{2},
    \end{equation*}
    and $\varepsilon_0>0$ such that
    \begin{equation*}
    \varepsilon_0 C_*\sup_{U\in \mathcal{U}, \|\omega\|\leq 3\rho_0}\|H''[U+\omega]\| <\frac{1}{3}
    \quad \mbox{and}\quad
    \varepsilon_0 C_* \sup_{U\in \mathcal{U}, \|\omega\|\leq \rho_0}\|H'[U+\omega]\| \leq \frac{\rho_0}{2}.
    \end{equation*}
    With these choices and the above estimates, it is easy to see that for every $U_\lambda\in \mathcal{U}$ and $|\varepsilon|\leq \varepsilon_0$,  $F_{\lambda,\varepsilon}$ maps $B_{\rho_0}$ into itself and is a contraction map. Therefore, $F_{\lambda,\varepsilon}$ has a unique fixed point $(\omega(\lambda,\varepsilon),l(\lambda,\varepsilon))$ in $B_{\rho_0}$ and it is a consequence of the implicit function theorem that $\omega$ and $l$ are continuously differentiable.

    From (\ref{fnl}) we also infer that $F_{\lambda,\varepsilon}$ maps $B_{\rho}$ into $B_{\rho}$, whenever
    \begin{equation*}
    2 C_*|\varepsilon| \sup_{U\in \mathcal{U}, \|\omega\|\leq\rho_0}\|H'[U+\omega]\|\leq \rho\leq\rho_0.
    \end{equation*}
    In order to get (\ref{wgx}), here we take
    \begin{equation*}
    \rho=\rho(\varepsilon):=2 C_*|\varepsilon| \sup_{U\in \mathcal{U}, \|\omega\|\leq\rho_0}\|H'[U+\omega]\|,
    \end{equation*}
    consequently, due to the uniqueness of the fixed point we have
    \begin{equation*}
    \|\omega(\lambda,\varepsilon),l(\lambda,\varepsilon)\|\leq 2 C_*|\varepsilon| \sup_{U\in \mathcal{U}, \|\omega\|\leq\rho_0}\|H'[U+\omega]\|,
    \end{equation*}
    which gives (\ref{wgx}).

    Let us now prove (\ref{wgt0}). Set
    \begin{equation}\label{defrl}
    \rho_\lambda=\min\left\{4 \varepsilon_0 C_* C_1 \||h|^{\frac{1}{p^*_\alpha}}U_{\lambda}\|^{p^*_\alpha-1}_*, \rho_0, (8 \varepsilon_0 C_* C_1)^{2-p^*_\alpha}\right\}
    \end{equation}
    where $C_1=C_1(\|h\|_\infty,\alpha,\lambda)>0$ is given as in Lemma \ref{lemhw}. In view of (\ref{gh1}), for any $|\varepsilon|<\varepsilon_0$ and $\lambda>0$ we have that
    \begin{equation*}
    C_*|\varepsilon| \sup_{\|\omega\|\leq\rho_\lambda}\|H'[U_\lambda+\omega]\|
    \leq |\varepsilon| C_* C_1 \||h|^{\frac{1}{p^*_\alpha}}U_{\lambda}\|^{p^*_\alpha-1}_*+ |\varepsilon| C_* C_1 \rho^{p^*_\alpha-2}_\lambda \rho_\lambda.
    \end{equation*}
    Since $\rho^{p^*_\alpha-2}_\lambda\leq (8 \varepsilon_0 C_* C_1)^{-1}$, we have
    \begin{equation*}
    C_*|\varepsilon| \sup_{\|\omega\|\leq\rho_\lambda}\|H'[U_\lambda+\omega]\|
    < |\varepsilon| C_* C_1 \||h|^{\frac{1}{p^*_\alpha}}U_{\lambda}\|^{p^*_\alpha-1}_*+ \frac{1}{4}\rho_\lambda \leq \frac{1}{2}\rho_\lambda,
    \end{equation*}
    then by the above argument, we can conclude that $F_{\lambda,\varepsilon}$ maps $B_{\rho_\lambda}$ into $B_{\rho_\lambda}$. Consequently, due to the uniqueness of the fixed-point we have
    \begin{equation*}
    \|\omega(\lambda,\varepsilon)\|\leq \rho_\lambda.
    \end{equation*}
    From (\ref{ghu}) and (\ref{defrl}), we have that $\rho_\lambda \to 0$ as $\lambda\to 0$ or $\lambda\to \infty$, then we get (\ref{wgt0}).
    \end{proof}

    Under the assumptions of Lemma \ref{lemreg}, for $|\varepsilon|<\varepsilon_0$ we may define
    \begin{equation}\label{defule}
    \mathcal{U^\varepsilon}:=\left\{u\in D^{2,2}_{\alpha,rad}(\mathbb{R}^N)|u=U_{\lambda}+\omega(\lambda,\varepsilon),\quad \lambda\in(0,\infty)\right\},
    \end{equation}
    where $\omega(\lambda,\varepsilon)\in \mathcal{E}$ is given as in Lemma \ref{lemreg}. Note that $\mathcal{U^\varepsilon}$ is a one-dimensional manifold. The next lemma will show that finding critical points for functional can be reduced to a finite dimensional problem.

    \begin{lemma}\label{lemcuve}
    Under the assumptions of Lemma \ref{lemreg}, we may choose $\varepsilon_0>0$ such that for every $|\varepsilon|<\varepsilon_0$ the manifold $\mathcal{U^\varepsilon}$ is a natural constraint for $\mathcal{J}_\varepsilon$, i.e., every critical point of $\mathcal{J}_\varepsilon|_{\mathcal{U^\varepsilon}}$ is the critical point of $\mathcal{J}_\varepsilon$.
    \end{lemma}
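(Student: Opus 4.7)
The plan is to show that whenever $u = U_\lambda + \omega(\lambda,\varepsilon) \in \mathcal{U}^\varepsilon$ is a critical point of $\mathcal{J}_\varepsilon|_{\mathcal{U}^\varepsilon}$, the differential $\mathcal{J}_\varepsilon'[u]$ vanishes on all of $D^{2,2}_{\alpha,rad}(\mathbb{R}^N)$. The first step is to invoke Lemma~\ref{lemreg}, which tells us $\mathcal{J}_\varepsilon'[u]$ already annihilates the codimension-one subspace $\mathcal{E}$; so, in the orthogonal decomposition $D^{2,2}_{\alpha,rad}(\mathbb{R}^N) = \mathbb{R}\dot{\xi}_\lambda \oplus \mathcal{E}$, one may write $\mathcal{J}_\varepsilon'[u]\phi = \ell\,\langle\dot{\xi}_\lambda,\phi\rangle_\alpha$ for some Lagrange-multiplier $\ell = \ell(\lambda,\varepsilon)\in\mathbb{R}$. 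The conclusion then reduces to proving $\ell=0$.

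Next I would test $\mathcal{J}_\varepsilon'[u]$ against the tangent direction to $\mathcal{U}^\varepsilon$ at $u$, namely $\tau := \partial_\lambda u = \dot{\xi}_\lambda + \partial_\lambda\omega$. The critical-point hypothesis on $\mathcal{U}^\varepsilon$ gives $0 = \mathcal{J}_\varepsilon'[u]\tau = \ell\bigl(\|\dot{\xi}_\lambda\|^2 + \langle\dot{\xi}_\lambda,\partial_\lambda\omega\rangle_\alpha\bigr)$, so it suffices to show that the parenthesized quantity is strictly positive. Since $\omega(\lambda,\varepsilon) \in \mathcal{E}$ yields $\langle\omega(\lambda,\varepsilon),\dot{\xi}_\lambda\rangle_\alpha = 0$, differentiating this identity in $\lambda$ converts the cross term into $-\langle\ddot{\xi}_\lambda,\omega(\lambda,\varepsilon)\rangle_\alpha$, which by Cauchy--Schwarz is bounded by $\|\ddot{\xi}_\lambda\|\,\|\omega(\lambda,\varepsilon)\|$.

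The main obstacle is making this bound small \emph{uniformly in $\lambda>0$}: what is really needed is smallness of the ratio $\|\ddot{\xi}_\lambda\|\,\|\omega(\lambda,\varepsilon)\|/\|\dot{\xi}_\lambda\|^2$, yet $\|\dot{\xi}_\lambda\|$ degenerates as $\lambda\to 0,\infty$. Here the scaling covariance of the family $U_\lambda$ rescues us: using that $P_\lambda$ is an isometry of $D^{2,2}_\alpha(\mathbb{R}^N)$ with $U_\lambda = P_\lambda(U_1)$, a direct differentiation shows $\dot{\xi}_\lambda = \lambda^{-1} P_\lambda(\Phi_1)$ and $\ddot{\xi}_\lambda = \lambda^{-2} P_\lambda(\Phi_2)$ for fixed functions $\Phi_1,\Phi_2\in D^{2,2}_\alpha(\mathbb{R}^N)$, so that $\|\ddot{\xi}_\lambda\|/\|\dot{\xi}_\lambda\|^2$ is bounded by a constant \emph{independent of $\lambda$}. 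Combining with the estimate (\ref{wgx}) then gives
\[
\frac{|\langle\dot{\xi}_\lambda,\partial_\lambda\omega\rangle_\alpha|}{\|\dot{\xi}_\lambda\|^2} \leq C\,\|\omega(\lambda,\varepsilon)\| \leq C\,C_2\,|\varepsilon|,
\]
uniformly in $\lambda$. Shrinking $\varepsilon_0$ once more so that $CC_2\varepsilon_0<1/2$, the parenthesized factor above is at least $\tfrac12\|\dot{\xi}_\lambda\|^2>0$, forcing $\ell=0$. Hence $\mathcal{J}_\varepsilon'[u]\equiv 0$ on $D^{2,2}_{\alpha,rad}(\mathbb{R}^N)$, which is exactly the natural-constraint property; if one wishes to view this as a critical point on the full space $D^{2,2}_\alpha(\mathbb{R}^N)$, the principle of symmetric criticality applies because $\mathcal{J}_\varepsilon$ is invariant under rotations.
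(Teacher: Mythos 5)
Your proposal is correct and follows essentially the same route as the paper: reduce to showing the Lagrange multiplier vanishes, test $\mathcal{J}'_\varepsilon[u]$ against the tangent vector $\dot U_\lambda+\dot\omega$, convert the cross term via differentiation of the orthogonality $\langle\omega,\dot U_\lambda\rangle_\alpha=0$, and control it uniformly in $\lambda$ using the scaling identities $\dot U_\lambda=\lambda^{-1}P_\lambda\dot U_1$, $\ddot U_\lambda=\lambda^{-2}P_\lambda\ddot U_1$ together with $\|\omega(\lambda,\varepsilon)\|\le C_2|\varepsilon|$. The only cosmetic difference is your closing remark on symmetric criticality, which the paper does not need since it works entirely in the radial space.
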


    \begin{proof}
    Fix $u\in \mathcal{U^\varepsilon}$ such that $\mathcal{J}'_\varepsilon|_{\mathcal{U^\varepsilon}}[u]=0$. From the definition of $\mathcal{U^\varepsilon}$ and by Lemma \ref{lemreg}, we can know the form of $u$ is that $u=U_{\lambda}+\omega(\lambda,\varepsilon)$ for some $\lambda>0$. In the following, we use a dot for the derivation with respect to $\lambda$. From the definition of $\mathcal{E}$, it holds that $\langle \dot{U}_\lambda,\omega(\lambda,\varepsilon) \rangle_\alpha=0$ for all $\lambda>0$, then we obtain
    \begin{equation}\label{cuvef}
    \langle \ddot{U}_\lambda,\omega(\lambda,\varepsilon) \rangle_\alpha+\langle \dot{U}_\lambda,\dot{\omega}(\lambda,\varepsilon) \rangle_\alpha=0.
    \end{equation}
    Moreover differentiating the identity $U_\lambda=P_{\sigma}U_{\lambda/\sigma}$ with respect to $\lambda$ we obtain
    \begin{equation}\label{cuvefs}
    \dot{U}_\sigma:=\frac{\partial U_\lambda}{\partial \lambda}\Big|_{\lambda=\sigma}=\frac{1}{\sigma}P_{\sigma}\dot{U}_{1}\quad \mbox{and}\quad \ddot{U}_\sigma:=\frac{\partial^2 U_\lambda}{\partial \lambda^2}\Big|_{\lambda=\sigma}=\frac{1}{\sigma^2}P_{\sigma}\ddot{U}_{1}.
    \end{equation}
    From (\ref{jes}) we get $\mathcal{J}'_\varepsilon[u]=c_1 \dot{U}_\lambda$ for some $\lambda>0$ and $c_1\in\mathbb{R}$. Then from (\ref{cuvef})-(\ref{cuvefs}) and (\ref{wgx}), we obtain
    \begin{equation*}
    \begin{split}
    0
    = & \mathcal{J}'_\varepsilon[u](\dot{U}_\lambda+\dot{\omega}(\lambda,\varepsilon))= c_1\langle\dot{U}_\lambda,\dot{U}_\lambda+\dot{\omega}(\lambda,\varepsilon)\rangle_\alpha
    =c_1 \lambda^{-2}(\|\dot{U}_1\|^2-\langle P_{\lambda}\ddot{U}_1, \omega(\lambda,\varepsilon)\rangle_\alpha) \\
    = & c_1 \lambda^{-2}(\|\dot{U}_1\|^2-\langle\ddot{U}_1, P_{\lambda^{-1}}\omega(\lambda,\varepsilon)\rangle_\alpha)
    =c_1\lambda^{-2}(\|\dot{U}_1\|^2-\|\ddot{U}_1\|O(1)\varepsilon).
    \end{split}
    \end{equation*}
    Finally, we see that for small $\varepsilon>0$ the number $c_1$ must be zero, therefore the conclusion follows.
    \end{proof}

    Now, we are in position to prove the main result.

    \noindent{\bf Proof of Theorem \ref{thmpwhp}.} Choose $\varepsilon>0$ small, then let $u^\varepsilon_\lambda=U_\lambda+\omega(\lambda,\varepsilon)$, where $\omega(\lambda,\varepsilon)$ is given in Lemma \ref{lemreg} and write
    \begin{equation*}
    \mathcal{J}_\varepsilon[u^\varepsilon_\lambda]=\mathcal{J}_0[U_{\lambda}]+\frac{1}{2}(\|u^\varepsilon_\lambda\|^2-\|U_{\lambda}\|^2)
    -\frac{1}{p^*_{\alpha}}\int_{\mathbb{R}^N}\frac{(1+\varepsilon h(x))((u^\varepsilon_\lambda)^{p^*_{\alpha}}_+ -U_{\lambda}^{p^*_{\alpha}})}{|x|^{\alpha} }dx
    -\varepsilon H[U_{\lambda}].
    \end{equation*}
    Recall that $\|U_{\lambda}\|=\|U_{1}\|$ does not depend on $\lambda$, then from (\ref{wgt0}) we infer that $u^\varepsilon_\lambda$ is uniformly bounded in $D^{2,2}_\alpha(\mathbb{R}^N)$, and $\|u^\varepsilon_\lambda-U_{\lambda}\|=\|\omega(\lambda,\varepsilon)\|=o(1)$ as $\lambda\to 0$ or $\lambda\to \infty$, therefore
    \begin{equation*}
    |\|u^\varepsilon_\lambda\|^2-\|U_{\lambda}\|^2|\leq (\|u^\varepsilon_\lambda\|+\|U_{\lambda}\|)\|u^\varepsilon_\lambda-U_{\lambda}\|=o(1).
    \end{equation*}
    Moreover, by H\"{o}lder's inequality and (CKN) inequality (\ref{Pi}) we obtain
    \begin{equation*}
    \begin{split}
    \left|\int_{\mathbb{R}^N}\frac{(1+\varepsilon h(x))((u^\varepsilon_\lambda)^{p^*_{\alpha}}_+ -U_{\lambda}^{p^*_{\alpha}})}{|x|^{\alpha} }dx\right|
    \leq & C\int_{\mathbb{R}^N}\frac{((u^\varepsilon_\lambda)^{p^*_{\alpha}-1}_+ +U_{\lambda}^{p^*_{\alpha}-1})|(u^\varepsilon_\lambda)_+ -U_{\lambda}|}{|x|^{\alpha}}dx \\
    \leq & C\|u^\varepsilon_\lambda-U_{\lambda}\|=o(1).
    \end{split}
    \end{equation*}
    Finally, from (\ref{gh0}), (\ref{ghu}) and (\ref{wgt0}), we already noticed that $|H[U_{\lambda}]|=o(1)$ as $\lambda\to 0$ or $\lambda\to \infty$, then we can conclude that
    \begin{equation*}
    \Gamma_{\varepsilon}(\lambda):=\mathcal{J}_\varepsilon[u^\varepsilon_\lambda]=\mathcal{J}_0[U_{\lambda}]+o(1)=\mathcal{J}_0[U_{1}]+o(1),\quad \mbox{as}\quad \lambda\to 0\quad \mbox{or} \quad \lambda\to\infty,
    \end{equation*}
    that is,
    \begin{equation*}
    \lim_{\lambda\to 0}\Gamma_{\varepsilon}(\lambda)=\lim_{\lambda\to \infty}\Gamma_{\varepsilon}(\lambda)=\mathcal{J}_0[U_1],
    \end{equation*}
    uniformly with respect to $\varepsilon$.
    Thus, $\Gamma_{\varepsilon}$ has at least one critical point $\lambda_{\varepsilon}$ (in fact, $\Gamma_{\varepsilon}$ might be constant and in this case we obtain infinitely many critical points). Hence $u^\varepsilon_{\lambda_\varepsilon}$ is a critical point for $\mathcal{J}_\varepsilon$ by Lemma \ref{lemcuve}, and the proof of Theorem \ref{thmpwhp} is now complete.

    \qed

\noindent{\bfseries Acknowledgements}

The research has been supported by National Natural Science Foundation of China 11971392, Natural Science Foundation of Chongqing, China cstc2019jcyjjqX0022 and Fundamental Research Funds for the Central Universities XDJK2019TY001.
We are very grateful to the referee for insightful suggestions, which has led to an important improvement of the paper.

    \end{document}